\newtheorem{theorem}{Theorem}
\numberwithin{theorem}{section}
\newtheorem{corollary}[theorem]{Corollary}
\newtheorem{lemma}[theorem]{Lemma}
\newtheorem{proposition}[theorem]{Proposition}
\theoremstyle{definition}
\newtheorem{definition}[theorem]{Definition}
\newtheorem{remark}[theorem]{Remark}
\newtheorem*{claim}{Claim}
\newcommand*{\NN}{\mathbb{N}}
\newcommand*{\ZZ}{\mathbb{Z}}
\newcommand*{\QQ}{\mathbb{Q}}
\newcommand*{\ran}{\operatorname{rng}}
\newcommand*{\conc}{\mathbin{\raisebox{0.9ex}{$\smallfrown$}}}
\newcommand*{\Fraisse}{\text{Fra\"{i}ss\'{e}}}
\newcommand{\rank}{\operatorname{rank}}
\newcommand*{\RCA}{{\normalfont {\bf RCA}}}
\newcommand*{\ACA}{{\normalfont {\bf ACA}}}
\newcommand*{\ATR}{{\normalfont {\bf ATR}}}
\newcommand*{\PiCA}{{\normalfont {\bf\Pi^1_1\text{\bf-CA}}}}
\newcommand*{\id}{\operatorname{id}}
\newcommand{\otp}{\operatorname{otp}}
\numberwithin{equation}{section}
\newcommand{\dotminus}{\mathbin{\text{\@dotminus}}}
\newcommand{\@dotminus}{%
  \ooalign{\hidewidth\raise1ex\hbox{.}\hidewidth\cr$\m@th-$\cr}%
}
\title{Weak well orders and Fra\"iss\'e's conjecture}
\author{Anton Freund and Davide Manca}
\address{University of W\"urzburg, Institute of Mathematics, Emil-Fischer-Stra{\ss}e~40, 97074 W\"urzburg, Germany}
\email{anton.freund@uni-wuerzburg.de{\normalfont, }davide.manca@uni-wuerzburg.de}
\thanks{Funded by the Deutsche Forschungsgemeinschaft (DFG, German Research Foundation) -- Project number 460597863.}
\begin{document}

\begin{abstract}
 The notion of well order admits an alternative definition in terms of embeddings between initial segments. We use the framework of reverse~mathematics to investigate the logical strength of this definition and its connection with Fra\"iss\'e's conjecture, which has been proved by Laver. We also fill a small gap in Shore's proof that Fra\"iss\'e's conjecture implies arithmetic transfinite recursion over~${\bf RCA}_0$, by giving a new proof of $\Sigma^0_2$-induction.
\end{abstract}

\keywords{Weak well order, Fra\"iss\'e's conjecture, Reverse mathematics, Arithmetic transfinite recursion}
\subjclass[2020]{03B30, 03F15, 03F35, 06A05}

\maketitle

\section{Introduction}

The study of well orders is of great importance to proof theory and offers a point of contact between the distinct approaches of ordinal analysis and reverse mathematics. The latter provides a well established framework to compare the axiomatic strength of theorems from various areas. A central idea is to prove equivalences between theorems and axioms over a weak base theory, such as the system~$\RCA_0$ of recursive comprehension. We refer to~\cite{friedman-rm,simpson09} for further background. With respect to that framework, the subsystem $\ATR_0$ appears to be the natural environment for the study of countable {ordinals}. As asserted by S.~Simpson, ``[$\ATR_0$] is the weakest set of axioms which permits the development of a decent theory of countable well orders''~\cite{simpson09}. In particular, $\ATR_0$ is equivalent to the statement that any two countable well orders can be compared \cite{friedman_hirst,simpson09}, in any of the following two ways. For linear orders $X$ and $Y$, an embedding is an order preserving map~$f:X\rightarrow Y$. If such a map exists, we write $X\le_w Y$. On the other hand, a strong embedding from $X$ into $Y$ is an isomorphism between $X$ and an initial segment of $Y$, i.\,e.~a set $I\subseteq Y$ such that we get $x\in I$ whenever we have $x<_Y y$ for some~$y\in I$. Thus, one obtains two quasi orderings over the class of linear orders, and assuming $\ATR_0$ their restrictions to well orders coincide. In view of previous work on these two notions of embeddability (see \cite{friedman_hirst, hirst94}), it seems natural to investigate whether initial segments can be employed to obtain a fruitful characterization of the notion of well order itself. The following is an obvious candidate for such a characterization.

\begin{definition}\label{def:wwo}
A countable linear order~$X$ is called a weak well order if no initial segment~$I\subseteq X$ can be embedded into a proper initial segment~$I_0\subsetneq I$.
\end{definition}

The restriction to countable orders is dictated by the framework of reverse mathematics. However, it does also have a more substantial motivation: Fra\"iss\'e's conjecture, which plays a central role in Proposition~\ref{prop:wwo-to-wo} below, is not available for general uncountable orders. In the following, we assume that all orders are countable. We now note that the property above is entailed by the usual definition of well order.

\begin{lemma}[$\RCA_0$]\label{lem:wo_to_wwo}
If $X$ is a well order, then it is a weak well order.
\end{lemma}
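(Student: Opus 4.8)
The plan is to argue by contraposition: assuming that $X$ fails to be a weak well order, I would extract an infinite strictly descending sequence in $X$, contradicting the hypothesis that $X$ is a well order (which in $\RCA_0$ we take to mean precisely that $X$ admits no such descending sequence). So suppose there are an initial segment $I\subseteq X$, a proper initial segment $I_0\subsetneq I$, and an embedding $f\colon I\to I_0$. Since $I_0$ is a proper initial segment of $I$, fix some $a\in I\setminus I_0$. The defining property of an initial segment then forces $b<_X a$ for every $b\in I_0$: indeed $a\ne b$, and $a<_X b$ would place $a$ into $I_0$, contrary to the choice of $a$.

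Next I would iterate $f$ starting from $a$, setting $a_0=a$ and $a_{n+1}=f(a_n)$. Because $f$ maps $I$ into $I_0\subseteq I$, a routine induction shows $a_n\in I$ for all $n$, so the recursion never leaves the domain of $f$ and the sequence $(a_n)_{n\in\NN}$ is well defined. This construction is available in $\RCA_0$: given the set coding $f$, the sequence is obtained by primitive recursion, whose totality is secured by $\Sigma^0_1$-induction on $n$.

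It then remains to check that $(a_n)_{n\in\NN}$ is strictly descending. For the base step, $a_1=f(a_0)\in I_0$ while $a_0=a\notin I_0$, so $a_1<_X a_0$ by the observation above. For the inductive step, $f$ is order preserving and hence, being a map between linear orders, injective; thus $a_{n+1}<_X a_n$ yields $a_{n+2}=f(a_{n+1})<_X f(a_n)=a_{n+1}$. This is a $\Sigma^0_0$-induction relative to the given sequence and so goes through in $\RCA_0$, producing the desired infinite descending sequence and the contradiction.

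I do not expect a serious obstacle, as the argument is elementary. The only points requiring care are the bookkeeping needed to carry out the iteration of $f$ inside $\RCA_0$ -- i.e.\ verifying that the primitive recursion defining $(a_n)_{n\in\NN}$ and the subsequent induction stay within the resources of the base theory -- and the decision to work with the descending-sequence formulation of well order rather than the least-element formulation, since the latter is not freely available at this level of strength.
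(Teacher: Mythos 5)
Your proposal is correct and follows essentially the same route as the paper's proof: argue the contrapositive, pick $a\in I\setminus I_0$, and iterate the embedding $f$ to produce the strictly descending sequence $a>f(a)>f(f(a))>\cdots$. The paper states this in one line, while you additionally spell out the $\RCA_0$ bookkeeping (primitive recursion for the iteration and the induction showing the sequence is descending), which is a harmless elaboration of the same idea.
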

\begin{proof} Aiming to prove the contrapositive, suppose that for some linear order $X$ there exist initial segments $I_0\subsetneq I\subseteq X$ and an embedding $f:I\rightarrow I_0$. Take any $x\in I\setminus I_0$, which is non-empty by assumption: then $x>f(x)>f(f(x))>...$, so that $X$ is not well founded.
\end{proof}

For the converse implication, we rely on the aforementioned Fra\"iss\'e conjecture, proved by R.~Laver~\cite{laver71}. It asserts that any infinite sequence of countable (or more generally $\sigma$-scattered) linear orders~$L_0,L_1,\ldots$ admits $i<j$ with $L_i\leq_w L_j$.

\begin{proposition}[$\RCA_0$]\label{prop:wwo-to-wo}
Fra\"iss\'e's conjecture entails that any weak well order is a well order.
\end{proposition}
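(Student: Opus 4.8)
The plan is to establish the contrapositive: assuming Fra\"iss\'e's conjecture, I will show that a linear order $X$ which fails to be a well order cannot be a weak well order. Since ``well order'' is taken to mean the absence of an infinite descending sequence, the failure of this property gives me, already in $\RCA_0$, a sequence $x_0 >_X x_1 >_X x_2 >_X \cdots$ of elements of $X$.

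From this descending sequence I would form the family of initial segments $I_n \coloneqq \{y \in X : y <_X x_n\}$. Since membership in $I_n$ is decided uniformly in $n$ by the bounded formula $y <_X x_n$, the sequence $(I_n)_{n \in \NN}$ exists as a single coded sequence of countable linear orders in $\RCA_0$. Because the $x_n$ are strictly descending, these segments are strictly nested: for $i < j$ we have $x_j <_X x_i$, so $I_j \subseteq I_i$, and moreover $x_j \in I_i \setminus I_j$, whence $I_j \subsetneq I_i$ is a \emph{proper} initial segment. Note also that an initial segment of $X$ contained in $I_i$ is automatically an initial segment of $I_i$, so $I_j$ witnesses a proper initial segment of $I_i$ in the sense of Definition~\ref{def:wwo}.

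Now I would apply Fra\"iss\'e's conjecture to the sequence $(I_n)_{n \in \NN}$, obtaining indices $i < j$ with $I_i \le_w I_j$, i.e.\ an order embedding $f : I_i \to I_j$. The crucial point is that the direction of the comparison furnished by the conjecture matches the nesting: the larger segment $I_i$ embeds into the smaller one $I_j$, which is precisely a proper initial segment of $I_i$. Thus $I_i$ is an initial segment of $X$ that embeds into the proper initial segment $I_j \subsetneq I_i$, so $X$ is not a weak well order, as required. The argument is short, and the only points demanding care are the verification that $(I_n)$ can be formed in $\RCA_0$ via uniform $\Delta^0_1$ (indeed $\Delta^0_0$) comprehension, and the bookkeeping ensuring that the index inequality $i<j$ delivered by the conjecture lines up with the inclusion $I_j \subsetneq I_i$ rather than the reverse. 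The main conceptual obstacle is simply to notice that a descending sequence yields a \emph{decreasing} chain of initial segments, so that Fra\"iss\'e's conjecture produces an embedding of a segment into a proper initial subsegment of itself.
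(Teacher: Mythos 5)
Your proof is correct and is essentially identical to the paper's: both take the contrapositive, form the initial segments $\{y \in X : y <_X x_n\}$ below the terms of a descending sequence, and apply Fra\"iss\'e's conjecture to obtain an embedding of a segment into a properly smaller one. Your additional remarks on forming the sequence via $\Delta^0_1$ comprehension and on the alignment of the index order with the nesting are exactly the details the paper leaves implicit.
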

\begin{proof}
Aiming for the contrapositive, let $(x_i)_{i<\omega}$ be an infinite descending sequence inside an ill founded linear order $X$. Define $L_i$ as the initial segment of $X$ consisting of all elements smaller than $x_i$. By Fra\"iss\'e's conjecture, there must be $i<j$ such that $I=L_i$ embeds into $I_0=L_j$. Due to $x_i>x_j$, we indeed have $L_j\subsetneq L_i$.
\end{proof}
To see the relation with the two notions of embeddability, observe that if a linear order $X$ embeds into a well order $Y$, regardless of weakly or strongly, then $X$ is also a well order. On the other hand, if $Y$ is just a weak well order, we can still conclude that $X$ is a weak well order when the embedding is strong. However, it is not immediate to reach this conclusion when the embedding is weak. Indeed, we will see that $\mathsf{ATR_0}$ is equivalent to the principle that $X$ is a weak well order whenever we have $X\leq_w Y$ for some weak well order~$Y$ (combine Lemma~\ref{lem:wwo-weak-embed} with Theorem~\ref{thm:atr-wwo}). The notion of well order described in Definition \ref{def:wwo} is weaker than the usual one in another sense as well: Corollary~\ref{cor:omega-omega-rca} provides an example of a linear order that, in a weak enough theory, can be proved to be a weak well order but not a well order.

The exact strength of Fra\"iss\'e's conjecture is an important open problem in reverse mathematics. In~\cite{Shore}, R.~Shore proves that the restriction of the conjecture~to well orders is equivalent to $\ATR_0$, over $\RCA_0+\Sigma^0_2\text{-induction}$. He then argues that the base theory for the latter result can be lowered to just $\RCA_0$. However, the final step that eliminates $\Sigma^0_2\text{-induction}$ uses that~$\omega^\omega$ is well founded, which~$\RCA_0$ cannot prove. A~new proof that Fra\"isse's conjecture implies $\Sigma^0_2$-induction over~$\RCA_0$ will be given in the present paper (see in particular Section~\ref{sect:Fraisse-Sigma2}). Concerning the upper bound, A.~Montalb\'an~\cite{Montalban_fraisse} has shown that Fra\"iss\'e's conjecture is provable in the axiom system~$\PiCA_0$.

It is natural to ask whether the full strength of Fra\"iss\'e's conjecture is needed for Proposition~\ref{prop:wwo-to-wo}, or whether the implication there is wildly inefficient. As it turns out, neither is the case. Based on the following, we will be able to conclude that arithmetic transfinite recursion holds when any weak well order is well founded.

\begin{proposition}[$\RCA_0$]\label{prop:backward_implication}
If every weak well order is a well order,
then the restriction of \Fraisse's conjecture to indecomposable well orders holds.
\end{proposition}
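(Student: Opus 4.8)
The plan is to argue by contraposition: from a counterexample to the restricted conjecture I will manufacture a weak well order that is not a well order, that is, a witness against the hypothesis. So suppose we are given indecomposable well orders $L_0,L_1,L_2,\ldots$ with $L_i\not\le_w L_j$ whenever $i<j$. I would consider the reversed sum $X=\cdots+L_2+L_1+L_0$, the linear order on $\{(i,a):i\in\omega,\ a\in L_i\}$ in which $(i,a)<(j,b)$ holds iff $i>j$, or $i=j$ and $a<_{L_i}b$. Selecting one element from each (nonempty) block yields an infinite descending sequence, so $X$ is not a well order, and it remains only to verify that $X$ is a weak well order.

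Toward this, suppose for a contradiction that some initial segment $I\subseteq X$ admits an embedding $f\colon I\to I_0$ into a proper initial segment $I_0\subsetneq I$. Every nonempty initial segment of $X$ consists of all blocks $L_j$ with $j>t$ in full, together with an initial segment $J_t:=I\cap L_t$ of a single top block $L_t$; a short computation shows that $I$ and $I_0$ first differ on this top block, so $I_0\cap L_t\subsetneq J_t$. Consider first the clean case $J_t=L_t$, where the top block is a full indecomposable block. Since $f\rstr L_t$ is an embedding and $L_t$ is a well order, its image meets only finitely many blocks — otherwise it would contain an infinite descending sequence — all of index $\ge t$ because $I_0\subseteq I$. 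Viewing the image as contained in a finite sum of block-portions and using the standard fact that an indecomposable well order embedding into a finite sum must embed into one of the summands, I obtain $L_t\le_w L_q$ for some met block index $q\ge t$. If $q>t$ this contradicts the hypothesis on the sequence; if $q=t$, the relevant portion lies in $I_0\cap L_t\subsetneq L_t$, so $L_t$ embeds into a proper initial segment of itself, contradicting Lemma~\ref{lem:wo_to_wwo}.

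The remaining case is the partial top block, $J_t\subsetneq L_t$. If $f(J_t)\subseteq L_t$, then $f\rstr J_t$ embeds $J_t$ into $I_0\cap L_t$, a proper initial segment of $J_t$; as $J_t$ is a well order, this again contradicts Lemma~\ref{lem:wo_to_wwo}. Otherwise some $a\in J_t$ has $f(a)$ in a block of index $>t$; since every element of the clean cut $Y:=\sum_{j>t}L_j$ lies below $a$, its image lies below $f(a)$, whence $f\rstr Y$ embeds $Y$ into the proper initial segment $\{z\in Y:z<f(a)\}$. This reduces the partial case to the clean-cut case already treated, now applied to $Y$, whose top block $L_{t+1}$ is once more a full indecomposable block.

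I expect the main obstacle to be exactly the bookkeeping for non-clean cuts: a priori a block could be sent "upward" into a lower-indexed (hence higher) block, a direction the one-sided hypothesis does not forbid, and one must check this never produces a genuine violation. The two facts that make the argument close are that images are forced into blocks of index $\ge t$, matching the asymmetric hypothesis $L_i\not\le_w L_j$ for $i<j$, and that a top block can only reach a proper initial segment of itself. Both the "finitely many blocks" step (via well-foundedness) and the routing of an indecomposable order through a finite sum must be carried out inside $\RCA_0$; these are the only delicate points, and both are routine for the concrete orders at hand.
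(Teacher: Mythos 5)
Your overall route is the paper's own: the same reversed sum, the same reduction of an arbitrary violating embedding to the situation where the relevant top block is a \emph{full} indecomposable block (the paper normalises the codomain to a clean cut, you normalise the domain; the two reductions play the same role), and the same endgame playing the asymmetric hypothesis $L_i\not\le_w L_j$ against Lemma~\ref{lem:wo_to_wwo}; arguing by contraposition rather than directly is immaterial. There is, however, one step that is a genuine problem over $\RCA_0$ as written: the appeal to the ``standard fact'' that an indecomposable well order embedding into a finite sum must embed into one of the summands. The standard proof of that fact is an induction on the number of summands whose induction hypothesis asserts the \emph{existence} of an embedding; this is an instance of $\Sigma^1_1$ induction (and iterating it also requires choosing one embedding at each stage), which is far beyond $\RCA_0$. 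Note that the number of blocks met by the image is a number of the model, not a fixed standard integer, so the induction cannot be externalised either. The paper itself warns about exactly this phenomenon when it remarks, before Theorem~\ref{thm:wf1}, that transitivity of embeddability along finite chains of arbitrary length ``uses a substantial amount of induction or choice''. So dismissing this point as routine is the one real flaw in your write-up.

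The repair is short, and it is precisely what the paper does at the corresponding point. Let $q$ be the \emph{least} block index met by the image of $L_t$; this exists by the $\Sigma^0_1$ least number principle, which also makes your preliminary ``finitely many blocks'' step unnecessary (that step is provable in $\RCA_0$, but you do not need it). Since no element of $L_t$ is mapped into a block of index smaller than $q$, and blocks of smaller index lie higher in $X$, the preimage of $L_q$ under $f\rstr L_t$ is a nonempty final segment $C$ of $L_t$. If $C\neq L_t$, write $L_t=B+C$ with both parts nonempty; an embedding of $L_t$ into the proper initial segment $B$ would contradict Lemma~\ref{lem:wo_to_wwo}, so indecomposability forces $L_t\le_w C$, and composing with $f\rstr C$ yields an embedding of $L_t$ into $I_0\cap L_q$ (if $C=L_t$, the map $f\rstr L_t$ itself does this). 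In other words, an indecomposable well order is automatically indecomposable to the right, which is all the ``routing'' you need, in a single step and with explicit embeddings throughout. Your two subcases $q>t$ and $q=t$ then finish exactly as you wrote, and with this replacement your proof is correct and coincides with the one in the paper.
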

\noindent Before we give the proof, let us recall that a linear order $X$ is indecomposable if, whenever $X=A+B$ holds for non-empty linear orders $A$ and $B$, we have that $X$ embeds into $A$ or that $X$ embeds into $B$. We say that $X$ is indecomposable to the left if it {always} embeds into $A$, and that $X$ is indecomposable to the right if it {always} embeds into $B$. In the special case where $X$ is a well order, Lemma \ref{lem:wo_to_wwo} implies that it {can only be} indecomposable to the right.
\begin{proof}
    Let $(X_i)_{i\in\omega}$ be an infinite sequence of indecomposable well orders: our aim is to find indices $i<j$ such that $X_i$ embeds into $X_j$. We may assume that no $X_i$ is empty. By $\omega^*$ we denote the order on~$\mathbb N$ with order relation ${\leq^*}=\{(m,n)\,|\,m\geq n\}$. Consider the linear order $\sum_{i\in\omega^*}X_i$: it is ill founded, as any family of points $x_i\in X_i$ gives rise to a descending sequence. Given the assumption from the proposition, we can conclude that it is no weak well order. Hence we get an embedding~$f$ from an initial segment $L$ into a shorter initial segment~$L_0$. We find an index $j$ and a non-empty initial segment $I\subseteq X_j$ such that $L=\sum_{i<^*j}X_i+I$. First we prove the thesis under the additional assumption that $L_0=\sum_{i<^*j+1}X_i$, and then we show that this does not violate the generality. 
    Under the additional assumption, $X_{j+1}\subseteq L$ must be embedded into $\sum_{i<^*j+2}X_i+J$ for some initial segment $J\subsetneq X_{j+1}$. In~fact, it embeds into $\sum_{i<^*j+2}X_i$: if some final segment of $X_{j+1}$ did embed into $J$, then so would all of $X_{j+1}$, against Lemma~\ref{lem:wo_to_wwo}. Let $i>j+2$ be the smallest index such that for some $x\in X_{j+1}$, we have $f(x)\in X_i$. Then, a final segment of $X_{j+1}$ embeds into $X_{i}$, and hence so does $X_{j+1}$.
    Now, if $L_0$ is included in $\sum_{i<^*j+1}X_i$, we can simply extend it. The only other possibility is that $L_0=\sum_{i<^*j}X_i+I_0$ holds for some $I_0\subsetneq I\subseteq X_j$. We get that the range of $f\upharpoonright I$ is not contained in~$I_0$, since otherwise the well order $X_j$ would violate Lemma \ref{lem:wo_to_wwo}. Hence, there is a non-empty initial segment $I'\subseteq I$ such that $\ran (f\upharpoonright I')\subseteq L'_0=\sum_{i<^*j+1}X_i$. This means that we can replace $L_0$ with $L'_0$ and $L$ with $L'=\sum_{i<^*j}X_i+I'$, to reduce to the special case that we have already treated.
\end{proof}

 By \cite[Corollary~2.16]{Shore}, the {conclusion} of Proposition~\ref{prop:backward_implication} implies \Fraisse's conjecture for {arbitrary} well orders and thus $\ATR_0$ over $\ACA_0$. Conversely, in Section~\ref{sect:cnf} we adapt Montalb\'an's~\cite{Montalban_fraisse} analysis of Fra\"iss\'e's conjecture via signed trees in order to show that $\ATR_0$ proves that any weak well order is a well order. In Section~\ref{sect:prov-unprov-cases}, we show that the latter implies $\ACA_0$ and is therefore equivalent to arithmetic trans\-finite recursion {over $\RCA_0$} (see Theorem~\ref{thm:atr-wwo}). In the same section, we will also see that, in sharp contrast, $\RCA_0$ suffices to prove that any weak well order that is closed under (a syntactic version of) ordinal exponentiation must already be a well order. Hence the principle that weak well orders are well orders is strong in general but weak in an important class of cases. We will argue (see Remark~\ref{rmk:natural}) that this dichotomy gives some new insight into the idea of `natural' descriptions of linear orders and proof-theoretic ordinals.

 \subsection*{Acknowledgements} {We are very grateful to Richard Shore for information and support with respect to his original proof and its connection with our Section~\ref{sect:Fraisse-Sigma2}.}

\section{Cantor normal form for weak well orders}\label{sect:cnf}

{In the present section, we show that $\ATR_0$ proves that every weak well order is a well order. To to so, we adapt an argument from Montalb\'an's analysis of Fra\"iss\'e's conjecture, in which the notion of Hausdorff rank plays an important role.}

A linear order $X$ is called scattered if $\QQ$ does not embed into it. Since every countable linear order embeds into $\QQ$, whenever $X$ is non-scattered we can consider a pair of embeddings $(f,g)$ such that $f:X\rightarrow \QQ$ and $g:\QQ\rightarrow X$. In general, if we can find such a pair of embeddings between two linear orders, we say that they are equimorphic. 

\begin{lemma}[$\RCA_0$]\label{lem:wwo_scatt}
   Every weak well order is scattered.
\end{lemma}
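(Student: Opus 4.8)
The plan is to prove the contrapositive: assuming $X$ is non-scattered, I will exhibit an initial segment $I \subseteq X$ together with an embedding into a proper initial segment $I_0 \subsetneq I$, thereby showing that $X$ fails to be a weak well order. Since $X$ is non-scattered, $\QQ$ embeds into it, so I have an embedding $g : \QQ \to X$. The crucial feature of $\QQ$ that I want to exploit is its self-similarity: $\QQ$ is isomorphic to many of its own proper initial segments, and more importantly an initial segment of $\QQ$ can be embedded into a strictly shorter one.

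Concretely, first I would fix a point $q_0 \in \QQ$ and consider the initial segment $\QQ_{<q_0} = \{q \in \QQ : q < q_0\}$. Because $\QQ$ has no least element and is densely ordered, one can build an order-preserving map sending $\QQ_{<q_0}$ into a proper initial segment of itself, say $\QQ_{<q_1}$ for some $q_1 < q_0$; indeed any initial segment of $\QQ$ with no maximum is order-isomorphic to $\QQ$ itself, which lets me compress it. The second step is to transport this picture into $X$ via the embedding $g$. Let $I$ be the initial segment of $X$ consisting of all elements below $g(q_0)$, and let $I_0$ be the initial segment below $g(q_1)$. Using $g$ restricted appropriately, composed with the self-embedding of $\QQ$, I obtain an embedding of $I$ into $I_0$. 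The essential check is that $I_0 \subsetneq I$, which follows from $q_1 < q_0$ and the fact that $g$ is order-preserving, so $g(q_1) < g(q_0)$ witnesses that $g(q_1) \in I \setminus I_0$.

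The step I expect to require the most care is the reverse-mathematical bookkeeping: everything must be carried out in $\RCA_0$, so I cannot appeal to the full isomorphism $\QQ_{<q_0} \cong \QQ$ as an abstract fact but must instead produce an explicit order-preserving map from a definable initial segment of $\QQ$ into a strictly smaller one, using only recursive comprehension. The map compressing $\QQ_{<q_0}$ into $\QQ_{<q_1}$ should be given by an explicit formula on the rationals (for instance an affine rescaling, suitably clamped), so that its graph is $\Delta^0_1$-definable and the composite with $g$ yields a genuine embedding of initial segments of $X$ whose existence $\RCA_0$ can verify. The non-emptiness of $I \setminus I_0$ and the fact that $I, I_0$ are genuine initial segments of $X$ are then routine consequences of $g$ being an embedding.
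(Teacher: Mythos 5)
There is a genuine gap in your argument: the map you construct is not defined on all of $I$. You set $I=\{x\in X: x<g(q_0)\}$ and propose to embed it into $I_0=\{x\in X: x<g(q_1)\}$ by composing ``$g$ restricted appropriately'' with a self-compression $h$ of $\QQ_{<q_0}$ into $\QQ_{<q_1}$. But $g$ maps \emph{from} $\QQ$ \emph{into} $X$, so any composite built from $g$ and $h$ alone has domain contained in $\ran(g)$, not in $I$. In general $I$ contains many points of $X$ that lie outside the copy of $\QQ$: for instance, if $X=A+\QQ$ for an arbitrary countable order $A$ and $g$ maps into the $\QQ$-part, then all of $A$ lies below $g(q_0)$, and nothing in your construction tells you where to send those points. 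An embedding of initial segments must be total on $I$, so this is not a bookkeeping issue but a missing idea.

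The missing ingredient is the \emph{other} direction of comparison with $\QQ$: since every countable linear order embeds into $\QQ$ (and this is provable in $\RCA_0$ by an explicit recursive construction), a non-scattered $X$ is actually \emph{equimorphic} with $\QQ$, i.e.\ there are embeddings $f:X\to\QQ$ and $g:\QQ\to X$. This is exactly how the paper argues: taking $x=g(q_0)$ and an embedding $h:\QQ\to\{q\in\QQ: q<q_0\}$ (which exists because $\QQ$ is indecomposable to the left), the composite $g\circ h\circ f$ embeds the whole of $X$ --- which is itself an initial segment --- into the proper initial segment $\{y\in X: y<x\}$. Note that working with all of $X$ rather than with your $I$ also sidesteps the need to restrict anything: $f$ handles every point of $X$, including those outside $\ran(g)$. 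Your outline of the compression $h$ and the verification that the target segment is proper are fine; what must be added is the embedding $f$ supplied by the universality of $\QQ$ among countable linear orders.
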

\begin{proof}
    We prove the contrapositive. Let $L$ be a non-scattered linear order and consider an equimorphism $(f,g)$ between $L$ and $\QQ$. Take $x\in L$ with $x=g(q_0)$ for some $q_0\in\QQ$. Due to the fact that $\QQ$ is indecomposable to the left, we can consider an embedding $h:\QQ\rightarrow \{q\in \QQ\,|\,q < q_0\}$. Then $g\circ h\circ f$ is an embedding of $L$ into the initial segment $\{y\in L\,|\,y<x\}$, so $L$ is not a weak well order.
\end{proof}

As stated in the introduction, all linear orders in the following are assumed to be countable. The main result of this section is that, in $\ATR_0$, any scattered weak well order $W$ has a Cantor normal form: that is to say, there is a well order $\alpha$ and a non-increasing sequence {$\langle\sigma(0),\ldots,\sigma(n-1)\rangle\in \alpha^{<\omega}$} such that $W$ is isomorphic to the order~$\omega^{\sigma(0)}+\ldots+\omega^{\sigma(n-1)}$. This can be {explained} as follows: given a linear order~$X$, we define~$\omega(X)$ as the order with underlying set
\begin{equation*}
\omega(X)=\{\langle x_0,\ldots,x_{n-1}\rangle\,|\,x_i\in X\text{ and }x_0\geq_X\ldots\geq_X x_{n-1}\}
\end{equation*}
and lexicographic comparisons. 
To make things more precise, we write $l(\sigma)$ for the length and $\sigma_i$ for the entries of a sequence $\sigma=\langle\sigma_0,\ldots,\sigma_{l(\sigma)-1}\rangle$. Then $\sigma\leq_{\omega(X)}\tau$ holds precisely if either we have $l(\sigma)\leq l(\tau)$ and $\sigma_i=\tau_i$ for all $i<l(\sigma)$ or there is some $j<\min\{l(\sigma),l(\tau)\}$ with $\sigma_j<_X\tau_j$ and $\sigma_i=\tau_i$ for $i<j$. Accordingly, the Cantor normal form of $W$ can be defined as an element $\sigma\in \omega(\alpha)$ such that $W$ is isomorphic to the initial segment $\{x\in\omega(\alpha)\,|\, x<_{\omega(\alpha)}\sigma\}$, denoted by $\omega[\sigma]$ for short. In the special case where $\sigma=\langle \beta\rangle$, we write the same initial segment as $\omega^\beta$ instead. {For $\sigma=\langle\sigma(0),\ldots,\sigma(n-1)\rangle$, this makes $\omega[\sigma]$ isomorphic to $\omega^{\sigma(0)}+\ldots+\omega^{\sigma(n-1)}$.}

{The promised result on Cantor normal forms entails} that every scattered weak well order is well founded. In light of Lemma \ref{lem:wwo_scatt}, this is sufficient to prove that every weak well order is a well order. Hirst has shown in \cite{hirst94} that $\ATR_0$ is equivalent to the fact that every well order admits a Cantor normal form, but unfortunately his proof does not seem to work for weak well orders. As mentioned before, our approach instead follows previous work by Montalb\'an: the proof of our Theorem~\ref{thm:forward_implication} essentially adapts the one of \cite[Lemma 3.4]{Montalban_equivalence}. Below, we give an informal explanation of how the two arguments are related, but that explanation is not necessary to follow the main argument, save for Remark \ref{rmk:interval}.

In \cite{Montalban_equivalence}, Montalb\'an uses the notion of Hausdorff rank, discussed below, to show that any scattered linear order can be decomposed into the sum of hereditarily indecomposable linear orders. Moreover, assuming a statement equivalent to \Fraisse's conjecture, that sum is finite. Hereditarily indecomposable linear orders can be represented as well founded trees with labels from the set $\{+,-\}$ on each node. The order associated to such a tree $T$ is called the linearization of $T$. The linearization is indecomposable to the left if the label on the root of $T$ is $``-"$, indecomposable to the right if the same label is $``+"$, in the sense explained below Proposition \ref{prop:backward_implication}. In the former case, the linearization is not a weak well order, since it embeds in any of its initial segments. Moreover, any subtree of $T$ represents an interval of the linearization, i.\,e.~a suborder $A$ such that if $x,z\in A$ and  $x<y<z$ then $y\in A$. In general, the following relation holds between a weak well order and its intervals:

\begin{remark}\label{rmk:interval} A linear order $X$ is a weak well order if and only if the same is true for every interval $A\subseteq X$. In fact, suppose that for some interval $A$ we have an embedding $f:J\rightarrow J_0$ of initial segments $J_0\subsetneq J\subseteq A$. Write $X$ as $I+J+I'$ and consider $\id_I$, the identity map on $I$. Then $I+J$ embeds into $I+J'$ via $\id_I+f$.
\end{remark}

The intervals associated to subtrees of $T$ are themselves hereditarily indecomposable linear orders. In light of the previous remark, if $T$ has at least one node with label $``-"$ its linearization is not a weak well order. Conversely, if all the nodes of $T$ have label $``+"$, the linearization of $T$ is a suborder of the Kleene-Brouwer order on a well founded tree related to $T$, and hence $\ACA_0$ proves that it is a well order. It follows by Lemma \ref{lem:wo_to_wwo} that a hereditarily indecomposable linear order is a weak well order if and only if it is the linearization of a tree $T$ with label $``+"$ on every node. Moreover, the linearization is isomorphic to $\omega^{\rank (T)}$ if a rank function on $T$ exists, and in particular when one assumes $\ATR_0$. Intuitively, this is the reason why in the case of weak well orders we are able to obtain a finite decomposition using only $\ATR_0$ instead of $\Fraisse$'s conjecture. Below, we prove this in detail.

First, we recall some known facts about scattered linear orders. Consider a linear order $L$ and a well order $\alpha$. We can assume that the field of $L$ consists of natural numbers. We define by simultaneous transfinite recursion an equivalence relation $\sim_\beta$ on $L$ and a subset $L_\beta\subseteq L$ for all $\beta<\alpha$. Let $\sim_0$ be the identity. We declare that $a\sim_{\beta+1}b$ holds if there exist finitely many points $c_0\le_L...\le_Lc_{n}$ in $L_\beta$ such that either $a\leq_L b$ and $a\sim_\beta c_0\wedge b\sim_\beta c_{n}$ or $b\le_L a$ and $b\sim_\beta c_0\wedge a\sim_\beta c_{n}$, and also for all $x$ between $a$ and $b$ there exists a unique $i\le n$ with $x\sim_\beta c_i$. If $\lambda$ is a limit ordinal, let $a\sim_\lambda b$ hold if we have $a\sim_\gamma b$ for some successor ordinal $\gamma<\lambda$. Moreover, for all $\beta<\alpha$ define $L_\beta=\{x\in L\,|\,x\le_\NN y \text{ for all }y\in L \text{ with } x\sim_\beta y\}$. Note that $L_0=L$ and for all $\beta$, if $x,y$ are distinct elements of $L_\beta$ then $x\not\sim_\beta y$. The set $L_\beta$ is called the $\beta$-th Hausdorff derivative of $L$. If $a\sim_\beta b$, we say that $a$ and $b$ are $\beta$-neighbours. The $\beta$-neighbourhood of $a$ is defined as $N^\beta(a)=\{b\in L\,|\,b\sim_\beta a\}$. It is easy to see that $\beta$-neighbourhoods are intervals. Moreover, if $b\sim_{\beta+1}a$, we have that $N^\beta(b)\subseteq N^{\beta+1}(a)$. Therefore, $N^{\beta+1}(a)$ may be written as $\sum_{b\in I}N^\beta(b)$, where $I=N^{\beta+1}(a)\cap L_\beta$. In $\ATR_0$ we can define the sequence $(\sim_\beta)_{\beta<\alpha}$, for any countable well order $\alpha$.

\begin{theorem}[Clote]\label{thm:clote}
Assume $\ATR_0$ and consider a scattered linear order $L$. Then there exists a countable well order $\alpha$, an $a\in L$ and $\beta<\alpha$ such that $L_\beta=\{a\}$.
\end{theorem}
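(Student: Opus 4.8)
The plan is to prove Clote's theorem by iterating the Hausdorff derivative and arguing that, for a scattered order, the process must terminate at a singleton. The key observation is that the Hausdorff derivative is \emph{strictly decreasing} in complexity as long as it contains more than one point. More precisely, I would first show that whenever $L_\beta$ has at least two elements, the successor derivative $L_{\beta+1}$ is a proper coarsening, so that the equivalence classes genuinely merge at each stage where this is possible. The crucial input is scatteredness: if the process \emph{never} collapsed to a singleton, one could extract a densely ordered quotient and thereby embed $\QQ$ into $L$, contradicting the hypothesis that $L$ is scattered. So the heart of the argument is to convert non-termination of the derivative sequence into a copy of $\QQ$ inside $L$.

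First I would set up the iteration in $\ATR_0$, which is exactly what licenses defining the transfinite sequence $(\sim_\beta)_{\beta<\alpha}$ for a given countable well order $\alpha$, as noted just before the statement. For each $\beta$ consider the quotient order $L/{\sim_\beta}$, whose field is canonically $L_\beta$ via the representative-choosing convention $x\le_\NN y$. The main lemma to establish is monotonicity: $\beta\le\gamma$ implies that $\sim_\gamma$ refines (is coarser than) $\sim_\beta$ in the sense $N^\beta(a)\subseteq N^\gamma(a)$, which follows from the relation $N^\beta(b)\subseteq N^{\beta+1}(a)$ recalled above together with a limit-stage verification. From this, $|L_\gamma|\le|L_\beta|$ as a cardinality-of-quotient statement, and more usefully the quotients form a directed system under which intervals only get absorbed, never split.

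Next I would argue that the derivative must eventually stabilise, and that the stable value is a singleton. Suppose toward a contradiction that for \emph{every} countable well order $\alpha$ and every $\beta<\alpha$ the set $L_\beta$ has at least two points. Then in particular the sequence of quotients never becomes trivial, so for cofinally many $\beta$ we have a strict coarsening step $L_{\beta+1}\subsetneq L_\beta$ (as subsets under the convention), and between two points that fail to become $\beta$-neighbours at any stage one finds, using the definition of $\sim_{\beta+1}$, a third point in a distinct class lying strictly between them. Iterating this "betweenness" extraction along a sequence of well-chosen stages produces a dense suborder of $L$, hence an embedding of $\QQ$, contradicting scatteredness. The correct way to package this in $\ATR_0$ is to take $\alpha$ large enough (e.g.~an ordinal bounding the Hausdorff rank, whose existence for scattered orders is the content one is entitled to invoke) and to locate the least $\beta$ with $L_\beta=\{a\}$.

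The hard part will be making the "non-termination yields $\QQ$" step fully rigorous within $\ATR_0$ rather than appealing to classical well-foundedness of the rank. The delicate point is that $\RCA_0$ cannot in general iterate the derivative, so one must be careful that every object used---the sequence $(\sim_\beta)_\beta$, the quotients, and the putative embedding of $\QQ$---is obtained by the arithmetical transfinite recursion that $\ATR_0$ provides, and that the choice of bounding ordinal $\alpha$ does not covertly presuppose a rank function one has not yet constructed. I expect the cleanest route is to fix $\alpha$ abstractly, run the recursion to obtain the full sequence of derivatives, and then do a single arithmetical argument showing that the first stage at which two distinct points would fail to merge can be used to read off density; scatteredness then forces merging to continue until only $\{a\}$ remains. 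Verifying that this density argument respects the betweenness clause in the definition of $\sim_{\beta+1}$ (the "unique $i\le n$" condition) is the main technical obstacle, since that clause is precisely what prevents spurious merging and thus what one must exploit to extract the dense set.
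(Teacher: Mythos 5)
The paper does not give a proof of this theorem at all --- it refers to Lemmas~13 and~14 of Clote's paper --- so your proposal must be judged on its own merits, and it has a genuine gap exactly where you locate ``the hard part''. Your plan is circular at the decisive point: you propose to ``take $\alpha$ large enough (e.g.~an ordinal bounding the Hausdorff rank, whose existence for scattered orders is the content one is entitled to invoke)'', but the existence of a countable well order bounding the derivative process \emph{is} the theorem; you are not entitled to invoke it. Set-theoretically one takes $\alpha=\omega_1$, which does not exist in second-order arithmetic, and this is precisely why the statement carries real logical strength. Note also that your two preliminary steps pull in opposite directions: for a scattered order the derivative is indeed strictly decreasing while $L_\beta$ has at least two points (if it stabilised, the stable quotient would be dense and $\QQ$ would embed), but then non-termination along a fixed $\alpha$ means \emph{strict} decrease all along $\alpha$, and from that no dense suborder can be extracted --- it simply means that the rank of $L$ exceeds $\alpha$.

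Indeed, the fixed-$\alpha$ version of your ``non-termination yields $\QQ$'' step is false, not merely hard to make rigorous. Take $L=\omega^\omega$ and $\alpha=\omega$: each finite derivative of $\omega^\omega$ again has order type $\omega^\omega$, so $L_\beta$ has at least two points for every $\beta<\alpha$, yet $L$ is scattered; moreover every pair of points of $L$ merges at some finite stage, so there are no ``points that fail to become neighbours at any stage'' from which your betweenness extraction could even start. Consequently, the desired contradiction must exploit the hypothesis for \emph{all} countable well orders $\alpha$ simultaneously, and that requires an ingredient absent from your sketch. The known routes in $\ATR_0$ are: (i) $\Sigma^1_1$-boundedness or the pseudohierarchy method --- the statement ``there is a derivative hierarchy along $\alpha$ that never reaches a singleton'' is $\Sigma^1_1$ in $\alpha$, so if it held for every well order it would hold for some ill-founded linear order, and from a hierarchy along an ill-founded order together with a descending sequence one extracts an embedding of $\QQ$ into $L$; or (ii) producing $\alpha$ directly, e.g.~as the rank of the well-founded tree of finite partial embeddings of $\QQ$ into $L$, which is where scatteredness enters. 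Your monotonicity observations are correct but routine; without one of the mechanisms above, the proposal does not prove the theorem.
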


For a proof we refer to Lemmas~13 and~14 of~\cite{clote_89}. Given $\beta$ and $a$ as in the theorem, we get that $L=N^\beta(a)$. In fact, if $b\in L$ and $c$ has minimal code among the $x\in L$ with $x\sim_\beta b$, it follows that $c\in L_\beta$ and hence $c=a$. But then we have~$b\in N^\beta(a)$. The minimal ordinal $\beta$ such that $L_\beta=\{a\}$ holds for an appropriate $a\in L$ is called the Hausdorff rank of $L$. In the following we adapt the proof of \cite[Lemma~3.4]{Montalban_equivalence} to our needs.

\begin{theorem}[$\ATR_0$]\label{thm:forward_implication}
    Every weak well order~$L$ admits a Cantor normal form, i.\,e.~there is a well order~$\alpha$ and a $\sigma\in\omega(\alpha)$ such that $L$ is isomorphic to $\omega[\sigma]$.
\end{theorem}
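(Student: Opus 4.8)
The plan is to prove the theorem by induction on the Hausdorff rank of $L$, which is available by Theorem~\ref{thm:clote} and the fact that weak well orders are scattered (Lemma~\ref{lem:wwo_scatt}). By Lemma~\ref{lem:wwo_scatt} the order $L$ is scattered, so we fix a countable well order~$\alpha$ together with $a\in L$ and $\beta<\alpha$ such that $L_\beta=\{a\}$; as noted after Theorem~\ref{thm:clote}, this gives $L=N^\beta(a)$. The strategy is to show, by transfinite induction on $\gamma\le\beta$, that every $\gamma$-neighbourhood $N^\gamma(b)$ arising inside $L$ is itself a weak well order with a Cantor normal form, building the normal form of $L=N^\beta(a)$ at the last step. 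The induction hypothesis should be stated uniformly enough to be carried through $\ATR_0$: for each $\gamma$, every $\gamma$-neighbourhood is isomorphic to some $\omega[\sigma]$ with $\sigma\in\omega(\alpha)$.

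The base case $\gamma=0$ is trivial, since $0$-neighbourhoods are singletons, which are isomorphic to $\omega[\langle 0\rangle]=\omega^0=1$. For the successor step, I would use the decomposition $N^{\gamma+1}(b)=\sum_{c\in I}N^\gamma(c)$ with $I=N^{\gamma+1}(b)\cap L_\gamma$, recorded in the discussion preceding Theorem~\ref{thm:clote}. Each summand $N^\gamma(c)$ is, by the induction hypothesis, isomorphic to some $\omega^{\delta_c}$ — and here the key point is that the exponent must be constant across the summands: since $N^{\gamma+1}(b)$ is an interval of $L$ and hence (by Remark~\ref{rmk:interval}) a weak well order, a $\gamma$-neighbourhood with a \emph{larger} exponent appearing to the left of one with a smaller exponent would produce an embedding of a longer initial segment into a shorter one, contradicting the weak well order property. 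Thus all summands are isomorphic to a fixed $\omega^\delta$, the index set $I$ is a weak well order of strictly smaller Hausdorff rank, and by a further application of the induction hypothesis $I\cong\omega[\tau]$ for some $\tau$; one then computes $\sum_{c\in I}\omega^\delta\cong\omega^{\delta}\cdot\omega[\tau]$, which reorganises into Cantor normal form. At a limit $\lambda$, every $\lambda$-neighbourhood is a union of an increasing chain of $\gamma$-neighbourhoods for $\gamma<\lambda$, and compatibility of the normal forms along the chain (again forced by the weak well order property via Remark~\ref{rmk:interval}) yields a normal form in the limit.

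The main obstacle, I expect, is the successor step, specifically the argument that the exponents $\delta_c$ are constant and that the summands genuinely assemble into $\omega^\delta\cdot(\text{something})$ rather than a more complicated sum. The weak well order hypothesis does the essential work here: it rules out any "descent" in the exponents as one moves rightward along $I$ (a left-heavy configuration would embed a long initial segment into a short one), while Lemma~\ref{lem:wo_to_wwo} handles the opposite direction once the relevant pieces are known to be genuine well orders. Care will be needed to verify that these comparisons of initial segments can be carried out, and the resulting isomorphisms constructed, within $\ATR_0$ — in particular that the transfinite recursion defining the normal forms along $\gamma\le\beta$ is arithmetically definable relative to the already-constructed Hausdorff derivatives, so that $\ATR_0$ suffices. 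A secondary technical point is bookkeeping: ensuring that the exponents $\delta$ produced at each stage are themselves elements of $\omega(\alpha)$ (or a closely related well order), so that the final object $\sigma$ lives in $\omega(\alpha)$ as the statement requires.
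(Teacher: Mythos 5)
Your overall skeleton (recursion on the Hausdorff rank, writing each $(\gamma+1)$-neighbourhood as $\sum_{c\in I}N^\gamma(c)$, gluing at limits) matches the paper's proof, but the successor step as you describe it contains a genuine error, and it is exactly there that the real work lies. First, your own induction hypothesis yields $N^\gamma(c)\cong\omega[\sigma_c]$, a \emph{finite sum} of $\omega$-powers, not a single power $\omega^{\delta_c}$; neighbourhoods are in general not additively indecomposable, so the objects you manipulate in the successor step are not what the hypothesis provides. Second, and more seriously, your ``key point'' that the exponents must be constant across the summands is false: $\omega^2+\omega$ is a well order (hence a weak well order by Lemma~\ref{lem:wo_to_wwo}), and its Hausdorff decomposition at the top level is precisely $N^2(c_0)+N^2(c_1)$ with $N^2(c_0)\cong\omega^2$ and $N^2(c_1)\cong\omega$ --- a larger exponent to the left of a smaller one. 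Such finite descent of exponents is exactly what Cantor normal form permits and produces no embedding of a longer initial segment into a shorter one, so the contradiction you invoke does not exist, and the mechanism you build on it ($\sum_{c\in I}\omega^\delta\cong\omega^\delta\cdot\omega[\tau]$) collapses. Third, applying the induction hypothesis to the index set $I$ is illegitimate: $I$ is a set of points of $L_\gamma$, not a $\gamma$-neighbourhood of $L$, so no instance of the recursion covers it; in fact, since distinct points of $L_\gamma$ inside one $(\gamma+1)$-neighbourhood are separated by only finitely many points of $L_\gamma$, the order $I$ embeds into $\ZZ$, so its possible order types are finite, $\omega$, $\omega^*$ and $\ZZ$.

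What the successor step actually requires --- and what the paper supplies --- is the following case analysis, none of which appears in your outline. (a) One uses the weak well order property (Remark~\ref{rmk:interval}, together with the well-foundedness of $\beta+1$) to rule out that $I$ contains an infinite descending sequence, i.e.\ to exclude the order types $\omega^*$ and $\ZZ$; this, not constancy of exponents, is the place where the hypothesis on $L$ enters. (b) When $I$ is finite, one concatenates the sequences $\sigma_c$ and deletes every entry that is smaller than a later entry; non-constant exponents are simply absorbed by ordinal arithmetic, with no contradiction needed. (c) When $I\cong\omega$, one analyses the infinite sequence $Y=\sigma_0\conc\sigma_1\conc\cdots$ of exponents: either $\beta$ occurs infinitely often, in which case $N^{\beta+1}(a)\cong\omega^{\beta+1}$, or one shows that the supremum of the tail is exactly $\beta$ (a strictly smaller supremum would force all points of the tail into a single $\beta$-neighbourhood, contradicting the infinitude of $I$), giving a sum of the form $\omega[\sigma_0\conc\cdots\conc\sigma_j]+\omega^\beta$. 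Your limit case is likewise underspecified: $N^{\beta(i)}(a)$ is an interval around $a$, not automatically an initial segment of $N^\lambda(a)$, so one must first show, again via Remark~\ref{rmk:interval}, that leftward extension can occur only finitely often, and then glue the isomorphisms using the uniqueness of embeddings between initial segments of well orders.
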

\begin{proof}
    If $L$ is a weak well order, it is scattered by Lemma \ref{lem:wwo_scatt}. Therefore, we can use $\ATR_0$ to define the sequence $(\sim_\beta)_{\beta<\alpha}$ for an $\alpha$ that satisfies Theorem~\ref{thm:clote}. Using that sequence as a parameter, we define $L_\beta$ for all $\beta<\alpha$. We aim to define, for all $\beta<\alpha$ and all $a\in L_\beta$, a sequence $\sigma\in\omega(\beta+1)$  and an isomorphism between $N^\beta(a)$ and the initial segment $\omega[\sigma]$. Since Theorem \ref{thm:clote} proves that there exist $\beta<\alpha$ and $a\in L$ with $L=N^\beta(a)$, the sequence $\sigma$ we find in that instance lists the exponents for the Cantor normal form of $L$.
    
    We proceed by arithmetical transfinite recursion, again with $(\sim_\beta)_{\beta<\alpha}$ as a parameter. Take a successor ordinal $\beta+1$ and assume we have defined all the desired sequences and isomorphisms up to level~$\beta$. Consider the $b\in L_\beta\cap N^{\beta+1}(a)$: all of those $b$ are separated from $a$, and hence from each other, by at most finitely many points of $L_\beta$. Hence, they are enumerated by the indices in some $M\subseteq \ZZ$, and we can write $N^{\beta+1}(a)=\sum_{i\in M}N^\beta(b_i)$. Inductively, for each $i\in M$ we have a sequence $\sigma_i\in\omega(\beta+1)$ and an isomorphism between $N^\beta(b_i)$ and $\omega[\sigma_i]$. If $M$ is finite, write it as $\{0,\ldots,n\}$. In that case, we consider the {concatenation} ${\tau=\sigma_0\conc...\conc \sigma_n}$. Let $\sigma$ be the sequence that results from $\tau$ when one deletes any entry that is smaller than a later one. In view of basic ordinal arithmetic, we computably obtain an ~isomorphism between $\sum_{i\in M}\omega[\sigma_i]$ and $\omega[\sigma]$. This relates to the fact that the notation system represents finite sums of additively indecomposable well orders, where every order of the form $\omega^\gamma+\omega^\delta$ with $\gamma<\delta$ is isomorphic to~$\omega^\delta$.
    
     Now suppose $M$ is infinite. We claim that we have $M=\omega$, modulo a change of indices. In fact, suppose that an initial segment $M'$ of $M$ was an infinite descending sequence: in that case, the induction hypothesis would yield an initial segment of $N^{\beta+1}(a)$ isomorphic to $\sum_{i\in \omega^*}\omega^{X(i)}$, where $X$ is an infinite sequence of elements of $\beta+1$ obtained by juxtaposing to the left the finite sequences $\sigma_j$ for $j\in M'$. We observe that there is an index $J$ and an increasing map $h:[J,+\infty[\rightarrow[J+1,+\infty[$ with $X(i)\le_\alpha X(h(i))$ for all $i\ge J$. In fact, if no such $J$ and $h$ did exist, for arbitrarily large $i$ we would find that $X(i)$ is greater than $X(j)$ for all except finitely many~$j$. But this would imply that $\beta+1$ is not well founded. We now see that $\sum_{i\le^*J}\omega^{X(i)}$ embeds into $\sum_{i\le^*J+1}\omega^{X(i)}$. Since isomorphisms preserve weak well orders and intervals, we have that $N^{\beta+1}(a)$ contains an interval which is not a weak well order. But $N^{\beta+1}(a)$ is an interval of $L$, which is a weak well order, so this contradicts Remark \ref{rmk:interval}. This proves our claim that $M=\omega$.
     
     We are now in the case where $N^{\beta+1}(a)$ is isomorphic to $\sum_{i\in\omega}\omega^{Y(i)}$ for an infinite sequence $Y=\sigma_0\conc \sigma_1\conc \sigma_2\dots$ with elements in $\beta+1$. We distinguish two cases. Suppose that there exists an index $j$ such that, for all $k>j$, we have $\beta \not \in \ran(\sigma_k)$, and write $Y=\sigma_0\conc\ldots\conc \sigma_j\conc Y'$. If there existed a $\gamma<\beta$ such that $\gamma\ge \sup Y'$, we would have that for all $i$, all the points in $\omega ^{Y'(i)}$ are $\gamma$-neighbours. In that case, the $\gamma$-th derivative of $\sum_{i\in\omega}\omega ^{Y'(i)}\cong\sum_{k>j}\omega[\sigma_k]$ would be $\omega$, and hence all the points in $\sum_{k>j}\omega[\sigma_k]$  would be $\beta$-neighbours. This contradicts the hypothesis that $M$ is infinite. Then it must be $\sup Y'=\beta$: in that case, we get that $N^{\beta+1}(a)$ is isomorphic to $\sum_{i\le j}\omega[\sigma_i]+\omega^\beta$, so that we can argue as in the case where $M$ is finite. In the other case, $\beta$ occurs infinitely often in $Y$. In that case, we get that $N^{\beta+1}(a)$ is isomorphic to $\omega ^{\beta+1}$. This concludes the discussion of the successor case.
     
     Now consider a limit ordinal $\lambda$ and $a \in L_\lambda$. Fix an increasing sequence of successor ordinals $\beta(i)<\lambda$ with $\lambda=\sup_{i\in\omega} \beta(i)$. Then $N^\lambda(a)=\bigcup_{i<\omega}N^{\beta(i)}(a)$, and each term of the union is isomorphic to $\omega[\sigma_i]$ for an appropriate $\sigma_i\in \omega({\beta(i)+1})$.
    We claim that there exists an $I$ such that $N^{\beta(I)}(a)$ is an initial segment of $N^\lambda (a)$. Suppose not: then we find a strictly increasing subsequence of indices $i_n$ such that $N^{\beta(i_{n+1})}(a)$ extends $N^{\beta(i_n)}(a)$ to the left. In other words, we find intervals $C_n$ such that $C_n+N^{\beta(i_n)}(a)$ is an initial segment of $N^{\beta(i_{n+1})}(a)$. Hence, $C_n$ is isomorphic to an initial segment of $\omega[\sigma_{i_{n+1}}]\subseteq\omega^{\beta(i_{n+1})+1}$. On the other hand, that initial segment must have order type at least $\omega^{\beta(i_n)}$, for otherwise the points in $C_n$ would be \mbox{$\beta{(i_n)}$-}neighbours of those in $N^{\beta(i_n)}(a)$. Moreover, since all the sequences involved are strictly increasing, we can find a strictly increasing map $h:\NN\rightarrow \NN$ verifying $\beta(i_{n+1})+1\le\beta(i_{h(n)})$ for all $n$. We then get ${\otp(C_n)\le \omega^{\beta(i_{n+1})+1}\le \otp (C_{h(n)})}$, so that $C_n$ embeds into $C_{h(n)}$. Thus, the {interval} $\sum_{n\in\omega^*}C_n$ embeds into $\sum_{n\le^*1}C_n$, which contradicts Remark \ref{rmk:interval}.
    
    {Still in the limit case, we now know that $N^{\beta(i)}{(a)}$ is an initial segment of $N^\lambda(a)$ for sufficiently large~$i$. For large $i<j$, the isomorphisms $N^{\beta(i)}(a)\cong\omega[\sigma_i]\subseteq\omega[\sigma_j]$ and $N^{\beta(j)}(a)\cong\omega[\sigma_j]$ must thus agree on $N^{\beta(i)}(a)$, since embeddings between initial segments of well orders are necessarily unique. We can thus glue them to get an isomorphism between $N^\lambda(a)$ and an initial segment of~$\omega^\lambda$, which has the desired form $\omega[\sigma]$ for a suitable~$\sigma\in\omega(\lambda+1)$.}
    \end{proof}

    As explained above, we can conclude the following:

    \begin{corollary}[$\ATR_0$]\label{cor:wwo-to-wo-over-atr}
    Every weak well order is a well order.
    \end{corollary}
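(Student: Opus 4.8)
The plan is to read the result off directly from the Cantor normal form supplied by Theorem~\ref{thm:forward_implication}, the idea being that a linear order presented in Cantor normal form is patently a well order. Given a weak well order~$L$, Theorem~\ref{thm:forward_implication} -- whose proof already incorporates the scatteredness from Lemma~\ref{lem:wwo_scatt} -- provides a well order~$\alpha$, an element~$\sigma\in\omega(\alpha)$, and an isomorphism of~$L$ with the initial segment~$\omega[\sigma]\subseteq\omega(\alpha)$. Since being a well order is invariant under isomorphism and passes to suborders, it suffices to check that~$\omega(\alpha)$ is a well order whenever~$\alpha$ is: then~$\omega[\sigma]$ is a well order as an initial segment of~$\omega(\alpha)$, and hence so is the isomorphic copy~$L$.

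The only point that calls for an argument is therefore the well-foundedness of the notation system~$\omega(\alpha)$, which is the standard fact that base-$\omega$ exponentiation preserves well orders and belongs to the theory of Cantor normal forms developed over~$\ATR_0$ (cf.~\cite{hirst94}). The underlying idea, which I would make explicit, is that an infinite descending sequence $\tau^0>\tau^1>\ldots$ in~$\omega(\alpha)$ has weakly decreasing leading entries; these stabilise by well-foundedness of~$\alpha$, and stripping the common leading entry exposes an infinite descent among shorter tails, so that a transfinite induction along~$\alpha$ (available to us over~$\ATR_0$) yields a contradiction. Equivalently, one may invoke that $\omega[\sigma]$ is isomorphic to the finite sum $\omega^{\sigma(0)}+\ldots+\omega^{\sigma(n-1)}$ and that well orders are closed under the operations $\beta\mapsto\omega^\beta$ and under finite sums.

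I expect no genuine obstacle in the corollary itself: all of the real work is carried out in Theorem~\ref{thm:forward_implication}, and what remains is essentially bookkeeping. The one matter deserving a moment's attention is the logical level at which the well-foundedness of~$\omega(\alpha)$ is established, since the stabilisation-and-stripping argument implicitly forms the successive tails together with the induced descending sequences; over~$\ATR_0$ the requisite comprehension and transfinite induction are unproblematic, and the corollary then follows at once by combining this closure property with Theorem~\ref{thm:forward_implication}.
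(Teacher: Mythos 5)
Your proposal is correct and follows essentially the same route as the paper: the paper also obtains the corollary by applying Theorem~\ref{thm:forward_implication} (whose proof starts from Lemma~\ref{lem:wwo_scatt}) to get $L\cong\omega[\sigma]\subseteq\omega(\alpha)$, and then uses the standard fact -- provable in $\ACA_0$ and hence in $\ATR_0$, as in the work of Girard and Hirst cited there -- that $\omega(\alpha)$ is a well order whenever $\alpha$ is, so that well-orderedness passes to the initial segment $\omega[\sigma]$ and its isomorphic copy~$L$. The only difference is presentational: the paper leaves the well foundedness of $\omega(\alpha)$ implicit, whereas you sketch its proof (stabilisation of leading entries plus transfinite induction), which is fine since that fact is standard and unproblematic over~$\ATR_0$.
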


   Conversely, one can of course infer Theorem~\ref{thm:forward_implication} from the given corollary and the aforementioned result by Hirst~\cite{hirst94}. At the same time, we find it interesting that our proof via Hausdorff ranks does directly yield Cantor normal forms.

\section{Provable and unprovable cases of weak well foundedness}\label{sect:prov-unprov-cases}

  In this section, we prove the following result and draw several consequences. {The definition of the transformation $X\mapsto\omega(X)$ was recalled in the previous section.}

\begin{theorem}[$\RCA_0$]\label{thm:ord_exp}
A linear order~$X$ is a well order precisely if $\omega(X)$ is a weak well order.
\end{theorem}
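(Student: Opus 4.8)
The plan is to prove both implications by contraposition, exploiting the correspondence between infinite descending sequences in $X$ and self-embeddings of initial segments of $\omega(X)$.

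For the direction ``$\omega(X)$ a weak well order $\Rightarrow X$ a well order'' I would start from an infinite descending sequence $x_0>_X x_1>_X\cdots$ and manufacture initial segments $I_0\subsetneq I$ of $\omega(X)$ together with an embedding $f:I\to I_0$. The natural candidate for $I$ is the segment $\{\sigma:\sigma_0<_X x_0\}=\omega^{x_0}$ (or the ``limit'' segment $\bigcup_k\{\sigma<_{\omega(X)}\langle x_0,\dots,x_{k-1}\rangle\}$), and the natural candidate for $f$ is a shift along the descending sequence, lifted to $\omega(X)$ entrywise. Since $\RCA_0$ can build such an $f$ by effective recursion from the given sequence, this direction is constructive in spirit; the delicate point is to keep $f$ order preserving and its image bounded inside a \emph{proper} initial segment when $X$ carries points lying strictly between consecutive $x_i$, which forces the recursion to re-index whole blocks $\{\sigma:x_{i+1}\le_X\sigma_0<_X x_i\}$ rather than individual entries.

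For the converse ``$X$ a well order $\Rightarrow\omega(X)$ a weak well order'' I would argue by contradiction. A failure of the weak well order property yields, as in Lemma~\ref{lem:wo_to_wwo}, an infinite descending sequence $(\tau^n)$ in $\omega(X)$; restricting $f$ to the segment below a witness $\delta\in I\setminus I_0$, I may assume $\tau^n<_{\omega(X)}\delta$ for one fixed $\delta$ of length $k=\lh(\delta)$. The structural facts I would extract, all available in $\RCA_0$, are that the leading entries $\tau^n_0$ are non-increasing in $X$, and that the position $j_n<k$ at which $\tau^n$ first drops below $\delta$ is non-increasing in the natural numbers, hence eventually constant. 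Stripping the resulting common $\delta$-prefix then either strictly shortens the bounding word $\delta$ or strictly lowers a leading entry, and in each such reduction one should be able to read off a strictly decreasing step in $X$, ultimately contradicting well-foundedness.

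The hard part, and the reason the statement is interesting over $\RCA_0$, is that this extraction must be performed \emph{without} proving $\omega(X)$ well-founded, which $\RCA_0$ cannot do even for $X=\omega$. The danger is that naive peeling produces only the stabilised leading values at each successive depth, an a priori non-recursive limit whose formation would need arithmetic comprehension or a choice principle along $X$. The boundedness $j_n<\lh(\delta)$ tames the multi-summand case by strictly shortening $\delta$, reducing matters to single powers $\omega^a$; I therefore expect the genuine crux to be exactly this single-power case, where the length cannot decrease and the common prefix of the $\tau^n$ may grow without bound. Here one must produce a descending sequence in $X$ directly from a single descending sequence in $\omega^a$, and I anticipate that the key is to use that $f$ is an embedding of an \emph{entire} initial segment — not merely the existence of its orbit — so that the required descending sequence can be defined by recursive comprehension rather than as an uncomputable limit.
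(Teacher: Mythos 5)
Both halves of your proposal have a genuine gap, and in each case what is missing is the central idea of the paper's proof of that half. For the direction from ill-foundedness of $X$ to the failure of the weak well order property for $\omega(X)$, the entrywise shift cannot exist in general. If $f$ is the entrywise lift of a map $g$ on entries and its image is to lie in a proper initial segment of $I=\{\sigma\,|\,\sigma<_{\omega(X)}\langle x_0\rangle\}$, then a short computation with length-one sequences and with prefixes of a bounding word shows that $g$ itself embeds an initial segment of $X$ into a proper initial segment, i.\,e.~that $X$ already fails to be a weak well order. This is a provably unavailable resource: in a model of $\RCA_0$ in which $\omega^\omega$ is ill founded (such models exist, as $\omega^\omega$ is the proof-theoretic ordinal of $\RCA_0$), the order $X=\omega^\omega$ is ill founded but is still a weak well order by Corollary~\ref{cor:omega-omega-rca}, so no such $g$ exists, although the theorem still demands a witnessing embedding for $\omega(X)$; more generally, extracting such a $g$ from ill-foundedness is essentially the principle that Theorem~\ref{thm:atr-wwo} proves equivalent to $\ATR_0$. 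Re-indexing "whole blocks" does not repair this: if a copy of $\QQ$ sits between $x_1$ and $x_0$ while later blocks are singletons, those rational points must be sent to sequences of unbounded length over $\{x_2,x_3,\dots\}$, and no shift-like recursion produces such a map. What does produce it --- and what your proposal lacks --- is the paper's observation that $\omega(\omega^*)\setminus\{\langle\rangle\}$ is a countable dense linear order without endpoints, hence isomorphic to $\QQ$ and therefore universal for countable orders: all of $\omega(X)$ embeds into this copy, and only then does one apply the entrywise map $\langle i(0),\dots,i(n-1)\rangle\mapsto\langle x_{1+i(0)},\dots,x_{1+i(n-1)}\rangle$, which is legitimate precisely because there every entry lies on the descending sequence.

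For the converse direction you diagnose the difficulty correctly, but the proof stops at the crux. Your reduction is moreover self-defeating: stabilising the positions $j_n$ and stripping the common prefix converts your data into a bare descending sequence inside a single power, and, as you note yourself (and as is forced by the Girard--Hirst result that preservation of well orders under $X\mapsto\omega(X)$ is equivalent to $\ACA_0$), no descending sequence alone can yield a descending sequence in $X$ over $\RCA_0$; yet you give no mechanism that uses the restriction of $f$ you still hold. The paper's proof never passes to an orbit. It isolates the claim that there is no embedding $h:\omega^x\to\omega^y\cdot n$ when $y<_Xx$, and proves it effectively: by the pigeonhole principle, two consecutive values among $h(\omega^y\cdot k)$ for $k\le n$ lie in the same copy of $\omega^y$, which forces $h(\omega^y\cdot(k+1))=\omega^y\cdot m+\omega^{y'}\cdot n'+\sigma$ with $y'<_Xy$ and yields a derived embedding of $\omega^y$ into $\omega^{y'}\cdot(n'+1)$; iterating this step as a primitive recursion on numerical indices $y_i,k_i,m_i$ (each step a bounded search through finitely many values of $h$) produces $y_0>_Xy_1>_X\cdots$ by recursive comprehension, contradicting the well-foundedness of $X$. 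The general case is then settled not by orbits but by finite induction on the prefixes $\sigma[j]$ of a single $\sigma\in I\setminus I_0$: one shows $\sigma[j]\le f(\sigma[j])$ for all $j\le\lh(\sigma)$, since a failure at step $j+1$ yields an embedding of $\omega^{\sigma_j}$ into some $\omega^y\cdot n$ with $y<_X\sigma_j$, against the claim, while reaching $j=\lh(\sigma)$ contradicts $f(\sigma)<_{\omega(X)}\sigma$. This pigeonhole-plus-recursion mechanism is exactly the content behind your sentence that "the key is to use that $f$ is an embedding of an entire initial segment", and without it the single-power case --- hence the whole direction --- remains unproved.
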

\begin{proof}
    First assume that $X$ is no well order. We fix a sequence $x_0>x_1>\ldots$ in~$X$. To show that $\omega(X)$ is no weak well order, we embed it into the proper~initial segment below~$\langle x_0\rangle$. For~$\omega^*$ as in the proof of Proposition~\ref{prop:backward_implication}, we have an embedding
    \begin{align*}
        \omega(\omega^*)&\to\{\sigma\in\omega(X)\,|\,\sigma<_{\omega(X)}\langle x_0\rangle\},\\
        \langle i(0),\ldots,i(n-1)\rangle&\mapsto\langle x_{1+i(0)},\ldots,x_{1+i(n-1)}\rangle.
    \end{align*}
    We claim that $\omega(X)$ embeds into~$\omega(\omega^*)$. Given that the orders considered in reverse mathematics are countable, it suffices to show that $Y:=\omega(\omega^*)\backslash\{\langle\rangle\}$ is an (effectively) dense linear order without endpoints, i.\,e.~isomorphic to~$\mathbb Q$ {(cf.~Lemma~\ref{lem:wwo_scatt})}. To see that an arbitrary $\sigma\in Y$ is no endpoint, we note
    \begin{equation*}
        \langle\sigma_0+1\rangle<_Y\sigma<_Y\langle\sigma_0,\ldots,\sigma_{l(\sigma)-1},\sigma_{l(\sigma)-1}\rangle.
    \end{equation*}
    Now consider an inequality~$\sigma<_Y\tau$. If we have $\sigma_i=\tau_i$ for $i<l(\sigma)<l(\tau)$, we get
    \begin{equation*}
        \sigma<_Y\langle\sigma_0,\ldots,\sigma_{l(\sigma)-1},\tau_{l(\sigma)}+1\rangle<_Y\tau.
    \end{equation*}
    In the remaining case, we have a $j<\min\{l(\sigma),l(\tau)\}$ with $\sigma_i=\tau_i$ for all~$i<j$ as well as $\sigma_j<^*\tau_j$ (which means $\sigma_j>\tau_j$ in~$\mathbb N$). Here we obtain
    \begin{equation*}
        \sigma<_Y\langle\sigma_0,\ldots,\sigma_{l(\sigma)-1},\sigma_{l(\sigma)-1}\rangle<_Y\tau.
    \end{equation*}
    To prove the other direction of our theorem, we now assume that~$X$ is a well order. {In the previous section, we have used $\omega[\sigma]$ as notation for~$\{\rho\in\omega(X)\,|\,\rho<\sigma\}$. To simplify notation, we now agree that $\omega[\sigma]$ can also be denoted by~$\sigma$.} For $\sigma,\tau\in\omega(X)$, we put $\sigma+\tau=\langle\sigma_0,\ldots,\sigma_{i-1},\tau_0,\ldots,\tau_{l(\tau)-1}\rangle$ where $i<l(\sigma)$ is minimal with $\sigma_i<\tau_0$ and $i=l(\sigma)$ if no such index exists. Let us also define $\omega^x\cdot n$ with $x\in X$ and $n\in\mathbb N$ as the element $\sigma\in\omega(X)$ with $\sigma_i=x$ for all $i<l(\sigma)=n$. We write $\omega^x$ at the place of~$\omega^x\cdot 1$. In the following, we use some basic ordinal arithmetic that is readily proved in our setting (cf.~\cite{sommer95}). To show that there are no embeddings into initial segments, we first consider the following special case:
    \begin{claim}
    There is no embedding $f:\omega^x\to\omega^y\cdot n$ for $x>_X y$ and $n\in\mathbb N$.
    \end{claim}
    {\renewcommand{\qedsymbol}{$\diamond$}
    \begin{proof}[Proof of the claim]
    Aiming at a contradiction, we assume that $f$ is an embedding as in the claim. By~the pigeonhole principle, we find $k,m<n$ with
    \begin{equation*}
        \omega^y\cdot m\leq f(\omega^y\cdot k)<f(\omega^y\cdot(k+1))<\omega^y\cdot(m+1).
    \end{equation*}
    This allows us to write
    \begin{equation*}
        f(\omega^y\cdot(k+1))=\omega^y\cdot m+\omega^{y'}\cdot n'+\sigma\quad\text{with $y'<y$ and $\sigma<\omega^{y'}$}.
    \end{equation*}
For future reference, we note that $y'$ and $n'$ can be computed from $y$ and $k$ relative to the given $f$. We now get a map
\begin{equation*}
f':\omega^y\to\omega^{y'}\cdot(n'+1)\quad\text{with}\quad f(\omega^y\cdot k+\tau)=\omega^y\cdot m+f'(\tau).
\end{equation*}
The idea is to iterate the construction to find $y>y'>\ldots$, against the assumption that $X$ is well founded. To perform the iteration over~$\mathsf{RCA_0}$, we do not form the sequence of functions $f,f',\ldots$ but use recursion to compute elements $y_i\in X$ and numbers $k_i,m_i$ that encode the relevant information. In the base of the recursion, we declare that $y_0,k_0,m_0$ coincide with $y,k,m$ from above. For the recursion step, we introduce the abbreviations
\begin{equation*}
\eta_i=\omega^{y_0}\cdot m_0+\ldots+\omega^{y_{i-1}}\cdot m_{i-1}\quad\text{and}\quad\xi_i=\omega^{y_0}\cdot k_0+\ldots+\omega^{y_{i-1}}\cdot k_{i-1}.
\end{equation*}
Let us inductively assume that we have
\begin{equation*}
\eta_i+\omega^{y_i}\cdot m_i\leq f(\xi_i+\omega^{y_i}\cdot k_i)<f(\xi_i+\omega^{y_i}\cdot(k_i+1))<\eta_i+\omega^{y_i}\cdot(m_i+1).
\end{equation*}
Note that we have already established that this holds for~$i=0$. As above, we now find $y_{i+1}<y_i$ and $n''\in\mathbb N$ with
\begin{equation*}
f(\xi_i+\omega^{y_i}\cdot(k_i+1))<\eta_i+\omega^{y_i}\cdot m_i+\omega^{y_{i+1}}\cdot(n''+1).
\end{equation*}
For $\eta_{i+1}=\eta_i+\omega^{y_i}\cdot m_i$ and $\xi_{i+1}=\xi_i+\omega^{y_i}\cdot k_i$, we learn that any $k\in\mathbb N$ validates
\begin{equation*}
\eta_{i+1}\leq f(\xi_{i+1}+\omega^{y_{i+1}}\cdot k)<f(\xi_i+\omega^{y_i}\cdot(k_i+1))<\eta_{i+1}+\omega^{y_{i+1}}\cdot(n''+1).
\end{equation*}
By the pigeonhole principle, a bounded search will thus yield~$k_{i+1},m_{i+1}\leq n''$ with
\begin{align*}
\eta_{i+1}+\omega^{y_{i+1}}\cdot m_{i+1}&\leq f(\xi_{i+1}+\omega^{y_{i+1}}\cdot k_{i+1})<{}\\
{}&<f(\xi_{i+1}+\omega^{y_{i+1}}\cdot(k_{i+1}+1))<\eta_{i+1}+\omega^{y_{i+1}}\cdot(m_{i+1}+1),
\end{align*}
as needed to complete the recursion step.
\end{proof}}
\noindent More generally, we now derive a contradiction from the assumption that $f:I\to I_0$ is an embedding between initial segments $I_0\subsetneq I\subseteq\omega(X)$. Pick a $\sigma\in I\backslash I_0$ and note that $f(\sigma)\in I_0$ entails $f(\sigma)<\sigma$. For $j\leq l(\sigma)$ we write $\sigma[j]=\langle\sigma_0,\ldots,\sigma_{j-1}\rangle$. We use induction on~$j$ to prove $\sigma[j]\leq f(\sigma[j])$. In view of $\sigma[l(\sigma)]=\sigma$, this yields the desired contradiction when we reach~$j=l(\sigma)$. For $j=0$ we note that $\sigma[0]=\langle\rangle$ is the smallest element of~$\omega(X)$. In the induction step, we have $\sigma[j+1]=\sigma[j]+\omega^{\sigma_j}$. If we had $f(\sigma[j+1])<\sigma[j+1]$, we would find $y<\sigma_j$ and $n\in\mathbb N$ with
\begin{equation*}
\sigma[j]\leq f(\sigma[j])<f(\sigma[j]+\omega^{\sigma_j})<\sigma[j]+\omega^y\cdot n.
\end{equation*}
This would yield an embedding
\begin{equation*}
    f':\omega^{\sigma_j}\to\omega^y\cdot n\quad\text{with}\quad f(\sigma[j]+\tau)=\sigma[j]+f'(\tau),
\end{equation*}
against the claim that was proved above.
\end{proof}

The following special case is interesting insofar as $\omega^\omega=\omega(\omega)$ is the proof theoretic ordinal of~$\RCA_0$, so that the latter cannot prove its well foundedness (cf.~\cite{kreuzer-yokoyama}).

\begin{corollary}[$\RCA_0$]\label{cor:omega-omega-rca}
The order $\omega^\omega$ is a weak well order.
\end{corollary}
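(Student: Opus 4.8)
The plan is to read the corollary off directly from Theorem~\ref{thm:ord_exp}. By the definition of the transformation $X\mapsto\omega(X)$ recalled in the previous section, the order $\omega(\omega)$ consists of the weakly decreasing finite sequences of natural numbers under lexicographic comparison; this is precisely the order $\omega^\omega$ of Cantor normal forms whose exponents lie below~$\omega$, so the identification $\omega^\omega=\omega(\omega)$ is just a matter of notation. Hence it suffices to apply Theorem~\ref{thm:ord_exp} with $X=\omega$, the standard order on~$\mathbb N$: the theorem tells us that $\omega(\omega)$ is a weak well order if and only if $\omega$ is a well order, so everything reduces to checking the latter over~$\RCA_0$.

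First I would verify that $\RCA_0$ proves $\omega$ to be a well order. Linearity is immediate, so only well-foundedness is at issue. Given a putative infinite descending sequence $g$ with $g(0)>g(1)>\cdots$ in~$\mathbb N$, I would prove $g(0)\geq g(n)+n$ for all~$n$ by induction on~$n$: the base case is trivial, and the step uses $g(n)\geq g(n+1)+1$. The induction formula is quantifier-free with $g$ as a parameter, so this argument is available from the induction contained in~$\RCA_0$. Instantiating $n=g(0)+1$ then yields $g(0)\geq g(g(0)+1)+g(0)+1>g(0)$, a contradiction. Thus no such $g$ exists and $\omega$ is a well order, whence Theorem~\ref{thm:ord_exp} gives that $\omega^\omega=\omega(\omega)$ is a weak well order over~$\RCA_0$.

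I do not expect any genuine obstacle here, since the substance is carried entirely by Theorem~\ref{thm:ord_exp}; the only point requiring care is the elementary observation that the well-foundedness of the \emph{exponent}~$\omega$ (as opposed to that of $\omega^\omega$ itself) is provable in the base theory. This is exactly what makes the corollary noteworthy: the same theory~$\RCA_0$ that cannot establish the well-foundedness of $\omega^\omega$ --- whose order type is its own proof-theoretic ordinal --- nevertheless proves the \emph{weak} well-order property, because through Theorem~\ref{thm:ord_exp} that property only reflects the trivially provable well-foundedness of~$\omega$.
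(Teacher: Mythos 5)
Your proposal is correct and matches the paper's intent exactly: the corollary is stated as an immediate instance of Theorem~\ref{thm:ord_exp} with $X=\omega$, using the identification $\omega^\omega=\omega(\omega)$ that the paper itself notes just before the corollary. Your additional verification that $\RCA_0$ proves the well-foundedness of $\omega$ (via the $\Sigma^0_0$-inductive bound $g(0)\geq g(n)+n$) is the only detail the paper leaves implicit, and it is carried out correctly.
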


Our next result will be used in order to lower the base theory in Theorem~\ref{thm:atr-wwo}, which will then supersede it. 

\begin{corollary}[$\RCA_0$]\label{cor:wwo-aca}
    Arithmetic comprehension follows from the statement that every weak well order is a well order.
\end{corollary}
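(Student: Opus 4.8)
The plan is to derive arithmetic comprehension in its standard guise as the existence of the range of an arbitrary injection $p\colon\NN\to\NN$, which is equivalent to $\ACA_0$ over $\RCA_0$ (see~\cite{simpson09}). I would use the hypothesis contrapositively: that every weak well order is a well order is, after contraposition, the assertion that every ill founded linear order admits a \emph{bad embedding}, i.e.~an embedding of some initial segment $I$ into a proper initial segment $I_0\subsetneq I$. This is the genuinely strong direction, as Proposition~\ref{prop:wwo-to-wo} makes clear: producing such an embedding from a mere descending sequence is precisely what requires real strength. The whole point is that I will not construct the bad embedding by hand; I will only exhibit an explicit descending sequence, trigger the hypothesis, and exploit the information carried by the mere existence of the resulting embedding.

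Concretely, given $p$ I would build a computable linear order $L_p$ together with an explicit computable infinite descending sequence in it, so that $\RCA_0$ sees $L_p$ as ill founded and the hypothesis yields an embedding $f\colon I\to I_0$ with $I_0\subsetneq I\subseteq L_p$. The design goal is that \emph{any} such $f$ lets one compute $\{m:\exists n\,p(n)=m\}$; reading this set off from $f$ as a parameter in $\RCA_0$ then produces the range of $p$, and hence $\ACA_0$. This is logically consistent because an explicit descending sequence is a far weaker object than a self\-embedding of initial segments: I want $L_p$ to be a weak well order relative to every oracle that does not compute the range of $p$, so that in an $\omega$-model lacking that range --- for instance the model of computable sets when the range is non-computable --- the order $L_p$ is an ill founded weak well order, exactly the configuration the hypothesis rules out. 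No circularity arises, since the descending sequence that invokes the hypothesis is computable (hence present in every model of $\RCA_0$ containing $p$), whereas the bad embedding it summons is not.

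The main obstacle is the construction of $L_p$ with this property, and here one must avoid the obvious candidates. Taking $L_p=\sum_{i\in\omega^*}\omega^{f(i)}$ in the spirit of Theorem~\ref{thm:forward_implication} and Proposition~\ref{prop:backward_implication} does make the order ill founded, but it admits low-complexity bad embeddings: such an embedding only encodes an increasing matching $h$ with $f(i)\le f(h(i))$, and such matchings can be produced computably whenever $f$ is unbounded. Thus this order is already not a weak well order over a weak base, so the hypothesis extracts nothing new from it, and in any case it would yield only pattern information about $f$, which is insufficient for $\ACA_0$. The real work is therefore to encode the stage-by-stage enumeration of the range of $p$ directly into the self\-embedding structure of $L_p$ --- so that any embedding of an initial segment into a proper one is forced to decode, for every $m$, whether $m$ eventually enters the range --- while keeping a single transparent descending sequence available to trigger the hypothesis. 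I would assemble $L_p$ from blocks arranged along $\omega^*$ whose isomorphism types record the enumeration, and then verify by a direct computability-theoretic argument that every bad embedding computes the range of $p$. This verification, and not the reduction to ranges of injections, is where essentially all of the difficulty lies.
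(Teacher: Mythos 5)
Your proposal has the right architecture---use the hypothesis contrapositively on an order whose ill foundedness is computably visible, and extract the range of the injection $p$ from the embedding that the hypothesis then provides---but it stops exactly where the mathematical content begins. The order $L_p$ is never constructed, and the verification that \emph{every} embedding of an initial segment of $L_p$ into a proper initial segment computes the range of $p$ is deferred, with the acknowledgement that this is where essentially all of the difficulty lies. That deferred step is not a detail: it \emph{is} the corollary. Correctly rejecting the naive candidate $\sum_{i\in\omega^*}\omega^{f(i)}$ does not produce a working one, so what remains is a statement of intent rather than a proof.

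The paper makes the corollary a two-line argument by factoring the work into two prior results: Theorem~\ref{thm:ord_exp}, proved in $\RCA_0$, which states that $X$ is a well order precisely if $\omega(X)$ is a weak well order, and the theorem of Girard~\cite{girard87} and Hirst~\cite{hirst94} that the implication ``$X$ well ordered $\Rightarrow$ $\omega(X)$ well ordered'' already yields arithmetic comprehension over $\RCA_0$. Given the hypothesis of the corollary, Theorem~\ref{thm:ord_exp} turns ``$X$ is a well order'' into ``$\omega(X)$ is a weak well order'', the hypothesis upgrades this to ``$\omega(X)$ is a well order'', and one cites Girard--Hirst. Unfolding these two ingredients produces exactly the object your plan calls for: the Girard--Hirst reversal manufactures from $p$ an order $X_p$ such that, whenever the range of $p$ is absent, $\omega(X_p)$ carries a $p$-computable descending sequence, while the proof of the forward direction of Theorem~\ref{thm:ord_exp}---the careful recursion that computes parameters $y_0>_X y_1>_X\cdots$ without ever forming a sequence of functions---is precisely the mechanism that converts a bad embedding of $\omega(X_p)$ into a descending sequence in $X_p$, from which the range is computable. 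So your $L_p$ ``should be'' $\omega(X_p)$; without Theorem~\ref{thm:ord_exp} or an equivalent of it, your reduction has no content. Two further cautions: the verification must be provable in $\RCA_0$, with its restricted induction, not merely true for $\omega$-models and oracles as your ``weak well order relative to every oracle'' phrasing suggests; and the triggering descending sequence is in general only available after a case distinction on whether the range exists, a wrinkle your unconditional formulation glosses over but which the official argument absorbs without effort.
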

\begin{proof}
    In view of Theorem~\ref{thm:ord_exp}, the given statement entails that $\omega(X)$ is a well order whenever the same holds for~$X$. The latter entails arithmetic comprehension, as proved by J.-Y.~Girard~\cite{girard87} and J.~Hirst~\cite{hirst94}.
\end{proof}

Together with Proposition~\ref{prop:wwo-to-wo}, we obtain the following.

\begin{corollary}[$\RCA_0$]\label{cor:Fraisse-aca}
Fra\"iss\'e's conjecture entails arithmetic comprehension.
\end{corollary}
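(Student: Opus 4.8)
The plan is to obtain the corollary simply by composing the two immediately preceding results, both of which have been established over $\RCA_0$. Proposition~\ref{prop:wwo-to-wo} shows that \Fraisse's conjecture entails the principle that every weak well order is a well order, while Corollary~\ref{cor:wwo-aca} shows that this principle in turn entails arithmetic comprehension. Since implications chain within a fixed base theory, reasoning inside $\RCA_0$ lets us pass from \Fraisse's conjecture to $\ACA_0$ in two steps.

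More explicitly, I would argue as follows. Working in $\RCA_0$, assume \Fraisse's conjecture. By Proposition~\ref{prop:wwo-to-wo}, the intermediate principle ``every weak well order is a well order'' follows. Feeding that principle into Corollary~\ref{cor:wwo-aca} then yields arithmetic comprehension. It is worth recalling that Corollary~\ref{cor:wwo-aca} itself proceeds through Theorem~\ref{thm:ord_exp}, which characterises well orders~$X$ as precisely those for which $\omega(X)$ is a weak well order, combined with the Girard--Hirst fact that the closure of well-foundedness under $X\mapsto\omega(X)$ implies arithmetic comprehension. None of this internal machinery needs to be reopened here, however, since it is already packaged into the cited corollary.

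The main point is therefore not a genuine obstacle but rather the bookkeeping of base theories: each of the two constituent implications is proved over $\RCA_0$, and neither step appeals to additional induction or comprehension, so the composite implication holds over $\RCA_0$ as well. Accordingly, there is nothing further to verify, and the corollary follows at once.
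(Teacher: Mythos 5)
Your proof is correct and is exactly the paper's own argument: the authors also obtain Corollary~\ref{cor:Fraisse-aca} by composing Proposition~\ref{prop:wwo-to-wo} with Corollary~\ref{cor:wwo-aca}, both established over $\RCA_0$. Nothing further is needed.
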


The previous corollary fills a small gap in Shore's proof that Fra\"iss\'e's conjecture entails arithmetic transfinite recursion, which was mentioned in the introduction. Let us note that our argument uses Fra\"iss\'e's conjecture for arbitrary linear orders, while Shore considers restricted versions of the conjecture for well orders. In the next section, we show how the aforementioned gap can be filled for these versions as well. {We now complete the proof of a main result of this paper.}

 \begin{theorem}[$\RCA_0$]\label{thm:atr-wwo}
    The following are equivalent:
    \begin{enumerate}[label=(\roman*)]
    \item arithmetic transfinite recursion,
    \item every weak well order is a well order.
    \end{enumerate}
\end{theorem}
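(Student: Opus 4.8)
The plan is to prove the two implications separately, assembling results already established earlier in the paper rather than developing anything new.

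For the implication from arithmetic transfinite recursion to~(ii), there is nothing further to do: this is precisely the content of Corollary~\ref{cor:wwo-to-wo-over-atr}, which asserts that $\ATR_0$ proves every weak well order to be a well order. So I would simply cite it.

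The substantive direction is (ii)~$\Rightarrow$~(i), and here I would proceed in three steps. First, I would apply Corollary~\ref{cor:wwo-aca} to the hypothesis that every weak well order is a well order, extracting arithmetic comprehension; this is what allows the rest of the argument to be carried out over the stronger base theory~$\ACA_0$. Second, I would apply Proposition~\ref{prop:backward_implication} to the same hypothesis to obtain the restriction of Fra\"iss\'e's conjecture to indecomposable well orders. Third, I would invoke \cite[Corollary~2.16]{Shore}, by which this restricted form of the conjecture implies Fra\"iss\'e's conjecture for arbitrary well orders, and hence arithmetic transfinite recursion, over~$\ACA_0$. Combining the arithmetic comprehension from the first step with the conclusion of the third then yields~$\ATR_0$, as required.

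The only point demanding care is matching up the base theories. Shore's passage from indecomposable to arbitrary well orders, and thence to~$\ATR_0$, is available over~$\ACA_0$ rather than over bare~$\RCA_0$, so it is essential to have secured arithmetic comprehension via Corollary~\ref{cor:wwo-aca} \emph{before} appealing to it; this is exactly why the first step cannot be omitted. Once $\ACA_0$ is in hand, the chain closes with no further obstacle---in particular, no separate argument for $\Sigma^0_2$-induction is needed here, since that follows from arithmetic comprehension. Thus the real work has already been done in the preceding propositions and corollaries, and the proof of the theorem amounts to wiring them together in the correct order.
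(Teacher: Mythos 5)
Your proposal is correct and coincides with the paper's own proof: the forward direction is Corollary~\ref{cor:wwo-to-wo-over-atr}, and the converse uses Corollary~\ref{cor:wwo-aca} to pass to $\ACA_0$, then Proposition~\ref{prop:backward_implication} together with Shore's result that Fra\"iss\'e's conjecture for indecomposable well orders yields arithmetic transfinite recursion over $\ACA_0$. Your remark about securing arithmetic comprehension before invoking Shore matches the paper's ordering of these steps exactly.
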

\begin{proof}
    The forward implication holds by {Corollary~\ref{cor:wwo-to-wo-over-atr}}. For the other implication, Corollary~\ref{cor:wwo-aca} allows us to argue in~$\ACA_0$. Over the latter, arithmetic transfinite recursion follows from \Fraisse's conjecture for indecomposable well orders, by a previously mentioned result of Shore~\cite{Shore}. We can conclude by Proposition~\ref{prop:backward_implication}.
\end{proof}

When we have $X\cong\omega(X)$, Theorem~\ref{thm:ord_exp} tells us that $X$ is a well order precisely if it is a weak well order. We want to draw the same conclusion under the prima facie weaker assumption that $\omega(X)$ embeds into~$X$. This is not a direct consequence of the cited theorem (though we will see that it is a consequence of its proof), because there is no elementary proof that weak well orders are preserved under embeddings (or suborders), as our next observation shows.

\begin{lemma}[$\RCA_0$]\label{lem:wwo-weak-embed}
The following are equivalent:
\begin{enumerate}[label=(\roman*)]
    \item Every weak well order is a well order.
    \item If $X\le_w Y$ and $Y$ is a weak well order, then so is $X$.
\end{enumerate}
\end{lemma}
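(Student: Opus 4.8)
The plan is to prove the two implications separately, with the backward direction (ii)$\Rightarrow$(i) carrying whatever content there is.

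For (i)$\Rightarrow$(ii), assume that every weak well order is a well order and suppose we are given $X\le_w Y$ via an embedding $f$, where $Y$ is a weak well order. By the assumption, $Y$ is in fact a well order. I would then invoke the elementary observation recorded in the discussion following Proposition~\ref{prop:wwo-to-wo}, namely that a weak embedding reflects well-foundedness: any infinite descending sequence $x_0>_X x_1>_X\cdots$ would yield $f(x_0)>_Y f(x_1)>_Y\cdots$, contradicting that $Y$ is a well order. Hence $X$ is a well order, and therefore a weak well order by Lemma~\ref{lem:wo_to_wwo}. All of this is routine in $\RCA_0$, since the image sequence is computable from $f$ and $(x_i)_{i\in\omega}$.

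For (ii)$\Rightarrow$(i), the key observation is that $\omega^*$ is itself a canonical witness against weak well-foundedness: the shift $n\mapsto n+1$ is an order-preserving injection of $\omega^*$ onto the proper initial segment $\{n\in\NN\,|\,n\geq 1\}$, so $\omega^*$ is not a weak well order (with $I=\omega^*$ and $I_0=\{n\geq 1\}$ in Definition~\ref{def:wwo}). Now let $W$ be a weak well order and suppose, towards a contradiction, that it is not a well order; fix an infinite descending sequence $w_0>_W w_1>_W\cdots$. The map $i\mapsto w_i$ is then an order isomorphism of $\omega^*$ onto the corresponding suborder of $W$ (note $m\leq^* n$, i.e.\ $m\geq n$ in $\NN$, gives $w_m\leq_W w_n$), so $\omega^*\le_w W$. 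Applying hypothesis~(ii) with $X=\omega^*$ and $Y=W$ would make $\omega^*$ a weak well order, contradicting the observation above. Hence $W$ is a well order.

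There is no serious obstacle here; the work is entirely bookkeeping. The two points to get right are the direction conventions for $\omega^*$, so that the descending sequence in $W$ induces an order-\emph{preserving} (not reversing) map and thus a genuine weak embedding, and the verification that this suborder map is formalizable in $\RCA_0$, which it is because both the map and its range are computable from $(w_i)_{i\in\omega}$. Conceptually, the lemma simply records that preservation of weak well orders under weak embeddings is already refuted by the single order $\omega^*$, which forces hypothesis~(ii) to collapse to~(i); the genuine strength of both statements is located in Theorem~\ref{thm:atr-wwo} rather than in this equivalence.
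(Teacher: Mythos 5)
Your proof is correct and follows essentially the same route as the paper: the forward direction combines the pullback of well-foundedness along embeddings with Lemma~\ref{lem:wo_to_wwo}, and the backward direction uses $\omega^*$ (shown to be no weak well order via the shift map) together with the weak embedding $\omega^*\le_w W$ induced by a descending sequence, then applies~(ii). The only cosmetic difference is that you argue by contradiction where the paper takes the contrapositive of~(i), which is the same step in different clothing.
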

\begin{proof}
Assuming~(i) and the premise of~(ii), we learn that~$Y$ and hence~$X$ is a well order. By Lemma \ref{lem:wo_to_wwo} it follows that~$X$ is a weak well order. We now assume~(ii) and derive the contrapositive of~(i). Suppose that $X$ is ill founded: any descending sequence witnesses $\omega^*\le_w X$, and $\omega^*\ni i\mapsto i+1$ is an embedding into a proper initial segment. So $\omega^*$ is no weak well order, and by~(ii) the same holds for~$X$.
\end{proof}

Concerning the following result, we note that the well orders with $\omega(X)\leq_w X$ are the $\varepsilon$-numbers. 

\begin{proposition}[$\RCA_0$]\label{prop:wo-wwo-epsilon}
    Consider a linear order~$X$. If we have $\omega(X)\leq_w X$, then $X$ is a well order precisely if it is a weak well order.
\end{proposition}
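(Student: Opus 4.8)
The plan is to prove the two directions separately, leveraging the work already done in Theorem~\ref{thm:ord_exp}. The forward direction is immediate: if $X$ is a well order, then Lemma~\ref{lem:wo_to_wwo} tells us it is a weak well order, and this requires no use of the hypothesis $\omega(X)\leq_w X$ at all. So the entire content lies in the converse: assuming $X$ is a weak well order (together with $\omega(X)\leq_w X$), I must show $X$ is a well order.

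\medskip

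For the converse I would argue the contrapositive. Suppose $X$ is not a well order, so it contains an infinite descending sequence $x_0>_X x_1>_X\ldots$. My goal is to show $X$ is not a weak well order, i.\,e.~to exhibit an embedding of some initial segment of $X$ into a proper initial segment of itself. The natural idea is to transport the ill-foundedness witness from $X$ up to $\omega(X)$: recall that in the first half of the proof of Theorem~\ref{thm:ord_exp}, precisely such a descending sequence was used to build an embedding of $\omega(X)$ into the proper initial segment of $\omega(X)$ below $\langle x_0\rangle$. Thus $\omega(X)$ is not a weak well order. Now I compose with the given weak embedding $e:\omega(X)\to X$ witnessing $\omega(X)\leq_w X$. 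The point is that $e$ pulls the pathology back down into $X$: I want to produce initial segments $I_0\subsetneq I\subseteq X$ and an embedding $I\to I_0$.

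\medskip

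The main obstacle is exactly the one flagged in the paragraph preceding the proposition: weak well orders are \emph{not} in general preserved under weak embeddings (that is the whole content of Lemma~\ref{lem:wwo-weak-embed}), so I cannot simply say ``$\omega(X)$ embeds into $X$ and $\omega(X)$ is not a weak well order, hence $X$ is not a weak well order.'' The hypothesis $\omega(X)\leq_w X$ alone does not let me transfer non-weak-well-order-ness through an arbitrary $e$. The resolution, and the reason the proposition is stated as a consequence of the \emph{proof} of Theorem~\ref{thm:ord_exp} rather than its statement, is that I should not treat $e$ as a black box but instead reproduce the explicit construction. Concretely, I would take $e$ to be order preserving and let $I=X$ (or a suitable initial segment containing the image) and build the embedding directly by tracking how $e$ interacts with the explicit self-embedding of $\omega(X)$ below $\langle x_0\rangle$; since $e$ is order preserving, $e(\langle x_0\rangle)$ is some element $b\in X$, and I aim to embed the initial segment of $X$ that is the $e$-image of $\omega(X)$ into the part of $X$ below $b$. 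The care needed is to verify that $b$ genuinely sits strictly inside an initial segment and that the composite map lands in a proper initial segment; this is where order-preservation of $e$ does the essential work.

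\medskip

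If that direct composition turns out to be awkward, an alternative I would keep in reserve is to invoke Corollary~\ref{cor:omega-omega-rca}-style reasoning together with the uniqueness of the construction: since the embedding $\omega(\omega^*)\to\{\sigma\in\omega(X)\mid\sigma<_{\omega(X)}\langle x_0\rangle\}$ from the proof of Theorem~\ref{thm:ord_exp} is fully explicit, and $\omega(X)\cong\omega(\omega^*)$ there, I can fix a concrete initial segment $J\subsetneq\omega(X)$ and a concrete embedding $\omega(X)\to J$, then push everything through $e$ at once. I expect the verification that $e[J]$ is a proper initial subsegment of $e[\omega(X)]$ within $X$ to be the only genuinely delicate computation, and I would handle it by using that $e$ is order preserving so that the $<_X$-structure on the image faithfully mirrors the $<_{\omega(X)}$-structure.
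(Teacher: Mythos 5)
Your forward direction agrees with the paper (it is exactly Lemma~\ref{lem:wo_to_wwo}), and you correctly diagnose the central obstacle in the converse: by Lemma~\ref{lem:wwo-weak-embed}, one cannot pass ``not a weak well order'' through the weak embedding $e\colon\omega(X)\to X$ as a black box. However, your proposed resolution does not actually close this gap. Both versions of your plan produce a map whose domain is $e[\omega(X)]$ (or $e[J]$), and these are merely suborders of $X$, not initial segments of $X$; your phrase ``the initial segment of $X$ that is the $e$-image of $\omega(X)$'' presupposes something false. Definition~\ref{def:wwo} demands an embedding $I\to I_0$ where $I$ and $I_0\subsetneq I$ are initial segments of $X$, so any point of $X$ below $b=e(\langle x_0\rangle)$ that lies outside the range of $e$ must belong to the domain of the required embedding, and your composite gives such points nowhere to go: an embedding defined on a suborder need not extend to its downward closure. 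Making the construction ``explicit'' does not help with this; what is missing is a map defined on all of $X$, not just on the image of $e$. (Also, the proof of Theorem~\ref{thm:ord_exp} only yields an equimorphism, not an isomorphism $\omega(X)\cong\omega(\omega^*)$.)

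There are two ways to supply the missing ingredient, and your write-up contains neither. The paper's route is to transfer a property that \emph{does} survive composition with $e$, namely non-scatteredness: from ill-foundedness of $X$, the proof of Theorem~\ref{thm:ord_exp} shows that $\omega(\omega^*)\setminus\{\langle\rangle\}$ is a countable dense order without endpoints, so $\QQ$ embeds into $\omega(X)$ and hence, via $e$, into $X$; then Lemma~\ref{lem:wwo_scatt} concludes that $X$ is no weak well order. That lemma succeeds precisely where your composition fails, because its proof uses an embedding $f\colon X\to\QQ$ (available for \emph{every} countable linear order), which puts all of $X$ into the domain. Alternatively, your direct approach can be repaired by precomposing with the canonical embedding $X\to\omega(X)$, $x\mapsto\langle x\rangle$: writing $g$ for the embedding of $\omega(X)$ into $\{\sigma\in\omega(X)\,|\,\sigma<_{\omega(X)}\langle x_0\rangle\}$ from the proof of Theorem~\ref{thm:ord_exp}, the map $x\mapsto e(g(\langle x\rangle))$ embeds $I=X$ into the proper initial segment $I_0=\{y\in X\,|\,y<_X e(\langle x_0\rangle)\}$, as required. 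Either the detour through $\QQ$ or the map $x\mapsto\langle x\rangle$ would complete your argument; as it stands, the proposal stops exactly at the point where the real work begins.
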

\begin{proof}
    The forward implication holds by Lemma~\ref{lem:wo_to_wwo}. For the converse direction, we assume that we have $\omega(X)\leq_w X$ and that~$X$ is ill founded. As in the proof of Theorem~\ref{thm:ord_exp}, we learn that $\mathbb Q$ embeds into $\omega(X)$ and hence into~$X$. By Lemma~\ref{lem:wwo_scatt} it follows that $X$ is no weak well order.
\end{proof}

One might have hoped that the approach from the proof of Corollary~\ref{cor:wwo-aca} could be extended. Specifically, H.~Friedman has shown that arithmetic transfinite recursion is equivalent to the statement that $\varphi_X(0)$ is well founded for any well order~$X$, again over~$\RCA_0$ (see~\cite{marcone-montalban-veblen,rathjen-weiermann-atr} for published proofs). Here $\varphi_X(0)$ is a notation system related to the Veblen hierarchy. In view of Proposition~\ref{prop:backward_implication}, it would seem conceivable that $\RCA_0$ proves~$\varphi_X(0)$ to be a weak well order for any well order~$X$. {Before Corollary~\ref{cor:wwo-to-wo-over-atr} had been established,} one might even have tried to give a proof that~$\Gamma_0=\min\{\alpha\,|\,\varphi_\alpha(0)=\alpha\}$ is a weak well order, perhaps in~$\RCA_0$ but at least in~$\ATR_0$, which has proof-theoretic ordinal~$\Gamma_0$. Parallel to Corollary~\ref{cor:Fraisse-aca}, this would have lead to the spectacular result that $\ATR_0$ does not prove Fra\"iss\'e's conjecture. However, the following result shows that none of the indicated possibilities can materialize. This yields an interesting contrast with Corollary~\ref{cor:omega-omega-rca}.

\begin{corollary}\label{cor:unprov-wwo}
The following holds with respect to the standard notation systems for proof-theoretic ordinals (see, e.\,g.,~\cite{pohlers-proof-theory}):
\begin{enumerate}[label=(\alph*)]
    \item In $\ACA_0$ one cannot prove that $\varepsilon_0=\varphi_1(0)$ is a weak well order.
    \item In $\ATR_0$ one cannot prove that $\Gamma_0$ is a weak well order.
\end{enumerate}
\end{corollary}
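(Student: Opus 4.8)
The plan is to reduce each assertion about weak well orders to the corresponding assertion about genuine well foundedness, and then to invoke the classical computation of proof-theoretic ordinals. Recall (see, e.g.,~\cite{pohlers-proof-theory}) that $\varepsilon_0$ is the proof-theoretic ordinal of $\ACA_0$ and $\Gamma_0$ that of $\ATR_0$; by the standard characterization of this ordinal, $\ACA_0$ does not prove that the standard notation system of order type~$\varepsilon_0$ is well founded, and $\ATR_0$ does not prove that the standard notation system of order type~$\Gamma_0$ is well founded. Hence, to establish (a) and (b), it suffices to show that if one of these theories proved the relevant order to be a weak well order, then it would already prove it to be a well order.

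For this step I would invoke Proposition~\ref{prop:wo-wwo-epsilon}: whenever $\omega(X)\leq_w X$, being a weak well order and being a well order coincide, provably in $\RCA_0$. The task thus reduces to checking, for the standard notation systems $X=\varepsilon_0$ and $X=\Gamma_0$, that $\RCA_0\vdash\omega(X)\leq_w X$.

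The key point is that $\varepsilon_0$ and $\Gamma_0$ are $\varepsilon$-numbers, so each is closed under $\alpha\mapsto\omega^\alpha$ and under finite sums. In terms of the notation system, the Cantor normal form map
\[
\langle a_0,\ldots,a_{n-1}\rangle\mapsto\omega^{a_0}+\cdots+\omega^{a_{n-1}}
\]
is a primitive recursive bijection from $\omega(X)$ onto $X$, and $\RCA_0$ proves that it is order preserving, since the lexicographic comparison defining $\omega(X)$ is precisely the comparison rule of the notation system. No appeal to well foundedness is needed here: the map and its order-preservation are purely combinatorial facts about the notation system, verifiable by $\Delta^0_1$ reasoning. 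Thus $\RCA_0\vdash\omega(X)\cong X$, and in particular $\omega(X)\leq_w X$, for $X\in\{\varepsilon_0,\Gamma_0\}$. Matching the abstract transformation $X\mapsto\omega(X)$ with the concrete standard notation is the one place where care is required, and I expect it to be the main (though essentially routine) obstacle.

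Putting the pieces together yields both parts. For (a), suppose toward a contradiction that $\ACA_0$ proved $\varepsilon_0$ to be a weak well order. Since $\RCA_0\vdash\omega(\varepsilon_0)\leq_w\varepsilon_0$, Proposition~\ref{prop:wo-wwo-epsilon} gives $\ACA_0\vdash{}$``$\varepsilon_0$ is a well order'', i.\,e.~$\ACA_0$ would prove the well foundedness of the notation system of order type $\varepsilon_0$, contradicting $\varepsilon_0=|\ACA_0|$. Part (b) is entirely analogous, with $\Gamma_0$ and $\ATR_0$ in place of $\varepsilon_0$ and $\ACA_0$.
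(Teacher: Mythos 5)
Your proposal is correct and follows essentially the same route as the paper: both reduce the claims to the unprovability of well foundedness of $\varepsilon_0$ and $\Gamma_0$ (as the proof-theoretic ordinals of $\ACA_0$ and $\ATR_0$) via Proposition~\ref{prop:wo-wwo-epsilon}, using that the standard notation systems come with embeddings $\omega(\varepsilon_0)\leq_w\varepsilon_0$ and $\omega(\Gamma_0)\leq_w\Gamma_0$. Your elaboration of why these embeddings are available in $\RCA_0$ (the syntactic Cantor normal form map) is exactly what the paper leaves implicit, so the two arguments coincide.
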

\begin{proof}
The point is that embeddings $\omega(\varepsilon_0)\leq_w\varepsilon_0$ and $\omega(\Gamma_0)\leq_w\Gamma_0$ are implicit in the standard notation systems. Hence by Proposition~\ref{prop:wo-wwo-epsilon}, the result reduces to the claim that $\ACA_0$ and $\ATR_0$ cannot prove the well foundedness of~$\varepsilon_0$ and $\Gamma_0$, respectively. This is true because the latter are the proof-theoretic ordinals of the indicated theories (see again \cite{pohlers-proof-theory}). Let us point out that we could have invoked Corollary~\ref{cor:wwo-to-wo-over-atr} rather than Proposition~\ref{prop:wo-wwo-epsilon} in order to prove~(b).
\end{proof}

\section{Fra\"iss\'e's conjecture and $\Sigma^0_2$-induction}\label{sect:Fraisse-Sigma2}

In the present section, we show that Fra\"iss\'e's conjecture for well orders entails $\Sigma^0_2$-induction over~$\RCA_0$. More precisely, it will suffice to assume either of two consequences of Fra\"iss\'e's conjecture, which assert that the countable well orders contain no infinitely descending sequences and no infinite antichains, respectively. As noted in the introduction, this fills a small gap in Shore's~\cite{Shore} proof that Fra\"iss\'e's conjecture implies arithmetic transfinite recursion over~$\RCA_0$. The issue with this proof is that it uses the well foundedness of $\omega^\omega$, which $\RCA_0$ cannot prove.

In the case of Fra\"iss\'e's conjecture for arbitrary linear orders (not necessarily~well founded), the aforementioned gap is filled by our Corollary~\ref{cor:omega-omega-rca} in conjunction with Proposition~\ref{prop:wwo-to-wo} (or by the stronger Corollary~\ref{cor:Fraisse-aca}). To accommodate the~restriction of Fra\"iss\'e's conjecture to well orders, we give an argument that is similar to Shore's but works with smaller ordinals. This will necessarily involve some new idea (which we explain after the proof), because Shore uses an infinite supply of indecomposable well orders, which cannot be bounded below~$\omega^\omega$. We first consider Fra\"iss\'e's conjecture for descending sequences of well orders, which Shore denotes ($\mathcal{WF}1$).

\begin{theorem}[$\RCA_0$]\label{thm:wf1}
The principle of $\Sigma^0_2$-induction is implied by the following restriction of Fra\"iss\'e's conjecture: for any infinite sequence of well orders~$L_0,L_1,\ldots$ such that each $L_{n+1}$ embeds into~$L_n$, there are $i<j$ such that $L_i$ embeds into~$L_j$.
\end{theorem}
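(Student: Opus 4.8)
The plan is to argue by contraposition: assuming that $\Sigma^0_2$-induction fails, I will construct an infinite sequence of well orders $L_0,L_1,\ldots$ that satisfies the hypothesis of the stated principle — each $L_{k+1}$ embeds into $L_k$ — while violating its conclusion, in that no $L_i$ embeds into any later $L_j$. First I would record the standard fact that the failure of $\Sigma^0_2$-induction over $\RCA_0$ yields a $\Sigma^0_2$-definable cut: after closing under bounded quantification one obtains a downward-closed, successor-closed proper initial segment $\{n:\varphi(n)\}$ with $\varphi(n)\equiv\exists a\,\forall b\,R(n,a,b)$, together with a bad index $N$ at which $\varphi$ fails. The intended reading is that $\varphi$ describes a two-layer search — for each $n$ one seeks a witness $a$ and certifies it against all $b$ — which progresses but, because the cut is proper, never terminates at $N$.

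From this data I would define, for each stage $k$, a well order $L_k$ realised as a suborder of a fixed finite power $\omega^m$, with $m$ reflecting the two-quantifier complexity of $\varphi$, so that $m$ may be taken small and, crucially, standard. Confining all the $L_k$ to a single fixed $\omega^m$ is the key departure from Shore's argument, which instead deploys an infinite and necessarily unbounded supply of indecomposable well orders and therefore implicitly invokes the well foundedness of $\omega^\omega$. Since $\RCA_0$ proves that $\omega^m$ is well founded for each standard $m$, each $L_k$ is provably a well order, and this holds uniformly in $k$ — precisely what is needed in order to feed the sequence into the principle of the theorem. The order $L_k$ should encode the state of the search after $k$ approximation steps, arranged so that refining the approximation by one step yields a canonical embedding $L_{k+1}\hookrightarrow L_k$; these embeddings are produced explicitly as a sequence.

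To deny the conclusion it suffices to prove $L_k\not\leq_w L_{k+1}$ for every individual $k$. Indeed, a forward embedding $L_i\leq_w L_j$ with $i<j$ could be composed with the finite descending chain $L_j\leq_w L_{j-1}\leq_w\cdots\leq_w L_{i+1}$ — a bounded composition available from $\Sigma^0_1$-induction — to yield $L_i\leq_w L_{i+1}$, contradicting the consecutive case. The hard part will therefore be to establish, for each $k$, that $L_k$ does not embed into $L_{k+1}$. This must be argued directly from the combinatorics of the cut using only the induction available in $\RCA_0$, since one cannot compare the order types of $L_k$ and $L_{k+1}$: the very impossibility of carrying out such comparisons for suborders of $\omega^m$ is the phenomenon that makes the whole configuration consistent with the negation of $\Sigma^0_2$-induction. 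Concretely, from a hypothetical embedding $L_k\to L_{k+1}$ I would aim to recover a certificate that the search at the bad index $N$ has in fact been resolved, i.e.\ that $\varphi(N)$ holds, contradicting the choice of $N$.

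With these two properties in hand, the stated restriction of Fra\"iss\'e's conjecture applies to the sequence $(L_k)$ and delivers indices $i<j$ with $L_i\leq_w L_j$, contradicting what we proved; hence $\Sigma^0_2$-induction must hold. I would finally verify that the construction is genuinely blocked in $\RCA_0$ alone: the sequence $(L_k)$ exists outright, yet the forward embedding supplied by the principle cannot be manufactured in $\RCA_0$, because producing it amounts to comparing well orders whose comparison encodes exactly the $\Sigma^0_2$ information withheld by the failing induction. This is what lets $\mathcal{WF}1$, a consequence of Fra\"iss\'e's conjecture that outstrips $\RCA_0$, do real work. A cleaner packaging that I would also consider is to isolate the intermediate assertion that $\mathcal{WF}1$ refutes the existence of a $\Sigma^0_2$ cut, and then invoke the standard equivalence between the non-existence of such cuts and $\Sigma^0_2$-induction.
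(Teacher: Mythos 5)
Your plan is, in effect, the contrapositive packaging of what the paper actually does, so the general shape is defensible; but as written it defers exactly the two steps that constitute the proof, and both are harder than your outline acknowledges. First, you never define the orders $L_k$. This is not a routine detail one can postpone: the natural definition (``the block for $x$ is finite, of size determined by the least witness, if $\varphi(x)$ holds, and is $\omega$ otherwise'') is a case distinction on an undecidable property and so does not define a set in $\RCA_0$. The paper's central device is to replace this case distinction by a partial computable search $u\mapsto k_x(u)$ and to take $M_{i,2x}=\{k:k=k_x(u)\text{ for some }u\geq i\}$ as a computable suborder of $\mathbb N$, which $\RCA_0$ can prove has order type $\max(0,u-i)$ or $\omega$ according to the (undecidable) truth of the $\Sigma^0_2$ statement; only with such ``nonstandard descriptions'' do the orders $N_i=\sum_{y<2n}1+M_{i,y}$, their well-foundedness, and the uniform backward embeddings all become available in $\RCA_0$. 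Second, and more seriously, your key step --- ``from a hypothetical embedding $L_k\to L_{k+1}$ recover a certificate that $\varphi(N)$ holds'' --- cannot be a matter of reading off a witness, because forward embeddings are not absolutely impossible: in the paper's construction the orders $N_i$ and $N_j$ are externally isomorphic (each block is $\omega$ or $\omega\cdot 2$ in both), and any construction of this kind will have the same feature. Their nonexistence is provable only \emph{relative to} the failure of induction, and establishing that implication is precisely the technical heart of the paper: one proves that any embedding $F\colon N_i\to N_j$ respects the block structure (two inductions, one of them downward, exploiting the $\omega$-separators), forms the finite exception set $X$ by bounded $\Sigma^0_1$-comprehension, shows $x\in X\Rightarrow\psi(x)$ and $\psi(x)\wedge x\notin X\Rightarrow\exists u\leq i\,\forall v\,\phi(x,u,v)$, and then applies $\Pi^0_1$-induction (via strong $\Sigma^0_1$-bounding) to the resulting formula. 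The ``certificate'' for $\psi(N)$ is thus the output of an induction that the embedding makes available, not data extracted from the embedding itself; none of this machinery appears in your proposal.

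Two further points in your framing need repair. Extracting a \emph{downward-closed} $\Sigma^0_2$ cut by ``closing under bounded quantification'' is not free: pushing $\forall m\leq n$ past $\exists u$ while staying $\Sigma^0_2$ is an instance of $\Sigma^0_2$-collection, which $\RCA_0$ does not prove (the equivalence of $\Sigma^0_2$-induction with the nonexistence of $\Sigma^0_2$-cuts is true but needs its own argument, e.g.\ a case split on whether collection holds). The paper avoids cuts altogether: it works directly with the induction statement and, for each fixed bound $n$, reduces induction for $\psi$ below $n$ to $\Pi^0_1$-induction --- a positive use of the principle rather than a refutation. Similarly, your reduction to consecutive indices by composing $L_j\leq_w L_{j-1}\leq_w\cdots\leq_w L_{i+1}$ is exactly the step the paper flags as problematic (transitivity along finite chains of arbitrary length is not obviously available); the paper sidesteps it by building into the construction that $N_k$ embeds into $N_m$ for \emph{all} $m<k$ and by running the decoding argument for arbitrary $i<j$, so that no composition is ever required. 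In sum: the skeleton is compatible with the paper's proof, but the load-bearing ideas --- the computable descriptions of possibly-finite orders, the separator-and-exception-set decoding, and the final reduction to $\Pi^0_1$-induction --- are all missing.
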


Let us note that we can get $j+1=i$ when we know that embeddability is transitive along finite chains of arbitrary length. However, the obvious proofs of this fact use a substantial amount of induction or choice. Alternatively, we could require the stronger condition that $L_k$ embeds into~$L_m$ for all~$m<k$, which is satisfied in the following construction.

\begin{proof}
Consider a $\Sigma^0_2$-formula $\psi(x)\equiv\exists u\forall v\,\phi(x,u,v)$. For arbitrary~$n\in\mathbb N$, we will construct well orders $N_0,N_1,\ldots$ such that $N_k$ embeds into~$N_m$ for all $m<k$ and any embedding $F:N_i\to N_j$ with $i<j$ allows us to compute a set $X\subseteq\mathbb N$ with
\begin{equation*}
\forall x<n\big(\psi(x)\leftrightarrow x\in X\lor\exists u\leq i\forall v\,\phi(x,u,v)\big).
\end{equation*}
Here $\exists u\leq i\forall v\,\phi(x,u,v)$ is equivalent to $\forall w\exists u\leq i\forall v\leq w\,\phi(x,u,v)$, by the principle of strong $\Sigma^0_1$-bounding (see Exercise~II.3.14 of~\cite{simpson09}). So induction for $\psi(x)$ up to~$n$ is~reduced to an instance of $\Pi^0_1$-induction, which is available in~$\mathsf{RCA_0}$.

We would like to have $N_i=\sum_{y<2n}1+M_{i,y}$ with $M_{i,2x+1}=\omega$ and
\begin{equation*}
M_{i,2x}\cong\begin{cases}
\max(0,u-i) & \text{if $\psi(x)$ holds and $u$ is minimal with $\forall v\,\phi(x,u,v)$},\\
\omega & \text{if $\neg\psi(x)$ holds}.
\end{cases}
\end{equation*}
However, this characterization of~$M_{i,2x}$ cannot serve as our definition, because the case distinction is undecidable. In order to resolve this issue, we first define a computable function~$(x,u)\mapsto k_x(u)$, which may be partial. When $k_x(u-1)$ is defined (where we read $k_x(-1)=0$ for the base case), we let $k_x(u)$ be the minimal~$k>k_x(u-1)$ such that there is a~$v\leq k$ with~$\neg\phi(x,u,v)$, if such a~$v$ can be found. If there is no such~$v$ or if~$k_x(u-1)$ is undefined, then $k_x(u)$ is undefined. Note that $k_x(u)$ is undefined precisely if there is a~$u'\leq u$ with $\forall v\,\phi(x,u',v)$. While the latter is undecidable as a property of~$u$, we can decide whether a given number has the form~$k_x(u)$, since we have $k_x(0)<k_x(1)<\ldots$ and hence $u\leq k_x(u)$. This allows us to form
\begin{equation*}
M_{i,2x}=\{k\in\mathbb N\,|\,\text{we have $k=k_x(u)$ for some $u\geq i$}\},
\end{equation*}
which we consider as a suborder of~$\mathbb N$. To confirm the characterization from above, we first assume that $u$ is minimal with $\forall v\,\phi(x,u,v)$. As noted above, this means that $k_x(u')$ is defined precisely for~$u'<u$, which clearly yields $M_{i,2x}\cong\min(0,u-i)$. Now assume that we have $\neg\psi(x)$. Then $k_x(u)$ is defined for all~$u$, so that we indeed obtain~$M_{i,2x}\cong\omega$. In particular, it follows that $N_i=\sum_{y<2n}1+M_{i,y}$ is a well order. When we have $m<k$, we clearly get $M_{k,2x}\subseteq M_{m,2x}$ for all~$x<n$, which entails that~$N_k$ embeds into~$N_m$. Thus the given consequence of Fra\"iss\'e's conjecture yields an embedding $F:N_i\to N_j$ for some~$i<j$.

To simplify the notation for elements of~$N_l$, we write $1+M_{l,2x}=\{0\}\cup M_{l,2x}$ and identify $1+M_{l,2x+1}=1+\omega$ with~$\omega$. Each $k\in 1+M_{l,y}$ yields an element~$(y,k)$ in the $y$-th summand of $N_l=\sum_{y<2n}1+M_{l,y}$. One can establish $F(y,0)\geq(y,0)$ by induction on $y<2n$, using that $M_{j,y}$ embeds into~$M_{i,y}$ and that no well order embeds into a proper initial segment of itself. A crucial feature of our construction is that we also get~$F(2x+1,k)<(2x+2,0)$ for all~$k\in\mathbb N$, which means that $F$ maps $M_{i,2x+1}\cong\omega$ into~$M_{j,2x+1}$. Intuitively, this is true because $N_i$ and $N_j$ have the same number of summands~$\omega$. Formally, we argue by induction from $x=n-1$ down to~$x=0$, where we interpret $(2n,0)$ as an additional point above~$N_j$, so that the claim for~$x=n-1$ is immediate. For the induction step, we derive a contradiction from the assumption that we have
\begin{equation*}
(2x,0)\leq F(2x-1,k)\quad\text{and}\quad F(2x+1,0)<(2x+2,0).
\end{equation*}
These inequalities entail that $F$ induces an embedding of $\omega+1+M_{i,2x}$ into a proper initial segment of $1+M_{j,2x}+\omega$. It follows that $M_{j,2x}$ must infinite, which can only hold if we have $\neg\psi(x)$ and hence $M_{i,2x}\cong\omega\cong M_{j,2x}$. But then we have an embedding of $\omega+\omega$ into a proper initial segment of itself, which is impossible.

Let us note that $F$ can map elements of $M_{i,2x}$ into $M_{j,2x+1}\cong\omega$ rather than~$M_{j,2x}$. To control this phenomenon, we form the set
\begin{equation*}
X=\{x<n\,|\,\text{there is a $k\in M_{i,2x}$ with $F(2x,k)\geq (2x+1,0)$}\},
\end{equation*}
which relies on bounded $\Sigma^0_1$-comprehension in $\RCA_0$ (see Theorem~II.3.9 of~\cite{simpson09}). If we have $x\in X$, there is a nonempty final segment $S\subseteq M_{i,2x}$ such that $F$ induces an embedding of $S+M_{i,2x+1}$ into~$M_{j,2x+1}\cong\omega$ (recall $F(2x+1,k)<(2x+2,0)$ from above). But then $M_{i,2x}$ cannot be isomorphic to~$\omega$, which means that we must have~$\psi(x)$. To confirm the equivalence from the beginning of this proof, we now assume that we have~$\psi(x)$ but~$x\notin X$. We may then consider the minimal~$u$ with $\forall v\,\phi(x,u,v)$. From $x\notin X$ we can infer that $F$ induces an embedding of $M_{i,2x}$ into~$M_{j,2x}$. We thus obtain
\begin{equation*}
M_{i,2x}\cong\min(0,u-i)\leq\min(0,u-j)\cong M_{j,2x}.
\end{equation*}
Given that we have $i<j$, this can only be true if we have $u\leq i$. In other words, we can conclude $\exists u\leq i\forall v\,\phi(x,u,v)$, as in the desired equivalence.
\end{proof} 

The original argument by Shore uses different indecomposable ordinals at the place of the summands $1+M_{i,2x}+\omega$ from the previous proof. Our main new idea is that one can use copies of $\omega$ as separators between the summands $1+M_{i,2x}$ if one employs a set~$X$ to recover information that is lost when a summand maps into a separator. We now consider Fra\"iss\'e's conjecture for antichains of well orders, which Shore denotes~($\mathcal{WF}2$). Our modifications have the nice side effect that the proofs for ($\mathcal{WF}1$) and ($\mathcal{WF}2$) become more similar than in Shore's original paper.

\begin{theorem}[$\RCA_0$]\label{thm:wf2}
The principle of $\Sigma^0_2$-induction is implied by the following restriction of Fra\"iss\'e's conjecture: for any infinite sequence of well orders~$L_0,L_1,\ldots$, there are $i\neq j$ such that $L_i$ embeds into~$L_j$.
\end{theorem}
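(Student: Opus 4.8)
The plan is to adapt the proof of Theorem~\ref{thm:wf1} to the antichain version of Fra\"iss\'e's conjecture, exploiting the structural similarity that the authors have engineered between their treatments of ($\mathcal{WF}1$) and ($\mathcal{WF}2$). As in the previous theorem, I would begin with a $\Sigma^0_2$-formula $\psi(x)\equiv\exists u\forall v\,\phi(x,u,v)$ and an arbitrary bound~$n$, and aim to construct a sequence of well orders~$N_0,N_1,\ldots$ together with a way to extract, from any embedding $F:N_i\to N_j$ with $i\neq j$, a set $X$ witnessing the reduction of $\psi$-induction up to~$n$ to an instance of $\Pi^0_1$-induction. The same reformulation via strong $\Sigma^0_1$-bounding would reduce the target to showing $\forall x<n\big(\psi(x)\leftrightarrow x\in X\lor\exists u\leq i\,\forall v\,\phi(x,u,v)\big)$, which is $\Pi^0_1$ once $X$ is in hand.

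The essential difference from Theorem~\ref{thm:wf1} is that we no longer control the direction of the embedding: we only obtain $i\neq j$, not $i<j$, so the summands $M_{i,2x}$ cannot be arranged to form a descending chain under embeddability. The fix is to break the symmetry by interleaving, for each~$i$, a block that makes $N_i$ embed into~$N_j$ only for one of the two orderings of the indices. Concretely, I would reuse the summands $M_{i,2x}$ built from the computable partial function $k_x$ exactly as before, so that $M_{i,2x}\cong\max(0,u-i)$ when $\psi(x)$ holds with minimal witness~$u$, and $M_{i,2x}\cong\omega$ when $\neg\psi(x)$. The $\omega$-separators $M_{i,2x+1}=\omega$ again ensure, by counting copies of~$\omega$ and using that no well order embeds into a proper initial segment of itself, that $F(y,0)\geq(y,0)$ and that the separators map into separators. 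The decisive step is then the same dichotomy: comparing $M_{i,2x}$ against $M_{j,2x}$ forces, via $\max(0,u-i)$ versus $\max(0,u-j)$, that a summand of the shorter order embeds only into the longer one, which now pins down which of $i,j$ is the ``smaller'' index in each relevant block.

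I expect the main obstacle to be handling the genuine two-sidedness: since we are only given $i\neq j$, the information recovered from $F$ must be symmetric enough that the same set~$X$ works regardless of whether $F$ runs from the longer to the shorter order or vice versa. The critical observation is that the inequality $\max(0,u-i)\leq\max(0,u-j)$ holds precisely when $j\leq i$ (for $u$ large enough to matter), so that once we know $M_{i,2x}$ embeds into~$M_{j,2x}$ for the relevant~$x$, we also learn the relative order of~$i$ and~$j$; after relabelling so that the source index is the larger, the argument collapses to the one-sided situation already treated. As in Theorem~\ref{thm:wf1}, the auxiliary set
\begin{equation*}
X=\{x<n\,|\,\text{there is a $k\in M_{i,2x}$ with $F(2x,k)\geq(2x+1,0)$}\}
\end{equation*}
is formed by bounded $\Sigma^0_1$-comprehension and records exactly those blocks where a summand leaks into a separator; membership $x\in X$ forces $\psi(x)$, while $x\notin X$ together with the induced embedding $M_{i,2x}\to M_{j,2x}$ yields $\max(0,u-i)\leq\max(0,u-j)$ and hence the witness bound. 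Closing this final equivalence, and verifying that the separator-counting argument survives the loss of a fixed orientation, is where the bulk of the care will be needed.
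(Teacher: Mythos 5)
Your proposal correctly identifies the obstacle (the loss of control over the direction of the embedding), but the concrete construction you give does not overcome it, and the step meant to handle it is invalid. You propose to reuse exactly the orders from Theorem~\ref{thm:wf1}, whose blocks $M_{i,2x}\cong\max(0,u-i)$ are \emph{decreasing} in the index~$i$. With that construction, $N_k$ embeds into $N_m$ whenever $m<k$, so the antichain principle is trivially satisfied: it may always hand you an embedding $F:N_i\to N_j$ with $i>j$, and such an embedding carries no information. Indeed, when $i>j$ the inequality $\max(0,u-i)\leq\max(0,u-j)$ holds for \emph{every}~$u$, so from $x\notin X$ and the induced embedding of $M_{i,2x}$ into $M_{j,2x}$ you can no longer conclude $u\leq i$ (nor any bound on~$u$), and the forward direction of the equivalence $\psi(x)\leftrightarrow x\in X\lor\exists u\leq i\forall v\,\phi(x,u,v)$ fails. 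Your suggested repair --- ``after relabelling so that the source index is the larger, the argument collapses to the one-sided situation'' --- does not work: $i$ and $j$ are concrete indices whose relative order is already known, the given embedding cannot be reversed, and the one-sided argument succeeds precisely when the source index is the \emph{smaller} one. Your opening remark about interleaving ``a block that makes $N_i$ embed into $N_j$ only for one of the two orderings'' points in the right direction, but it is never implemented: the construction you then describe contains no such block.

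The paper's proof supplies exactly this missing ingredient by adding, for each $x<n$, a second family of blocks that is \emph{increasing} in the index: it works with $N'_i=\sum_{y<4n}1+M'_{i,y}$, where $M'_{i,4x}\cong\max(0,u-i)$ (as in your proposal), $M'_{i,4x+2}\cong i+u$ when $\psi(x)$ holds with minimal witness~$u$ (and $\cong\omega$ when $\neg\psi(x)$ holds), and $M'_{i,4x+1}=M'_{i,4x+3}=\omega$ are separators. Since whether a block is finite depends only on $\psi(x)$ and not on the index, one has $M'_{i,2y}+\omega\cong M'_{j,2y}+\omega$ for all~$y$, which is what keeps the separator-counting argument alive regardless of the order of $i$ and~$j$. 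The case $i<j$ is then handled exactly as in Theorem~\ref{thm:wf1} via the blocks $M'_{i,4x}$, while in the case $i>j$ one uses the blocks $M'_{i,4x+2}$: if $\psi(x)$ held with minimal witness $u$ and $x\notin X'$, the embedding $F'$ would induce an embedding of $M'_{i,4x+2}\cong i+u$ into $M'_{j,4x+2}\cong j+u$, which is impossible for finite orders with $i>j$; hence $\psi(x)\leftrightarrow x\in X'$ in that case, which again reduces the induction to a $\Pi^0_1$ instance. Without a block of this second kind, no information can be extracted when the embedding runs from a larger index to a smaller one.
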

\begin{proof}
Fix a $\Sigma^0_2$-formula $\psi(x)\equiv\exists u\forall v\,\phi(x,u,v)$ and some~$n\in\mathbb N$. As in the previous proof, we find well orders $N'_i=\sum_{y<4n}1+M'_{i,y}$ with $M'_{i,4x+1}=M'_{i,4x+3}=\omega$ and
\begin{align*}
M'_{i,4x}&\cong\begin{cases}
\max(0,u-i) & \text{if $\psi(x)$ holds and $u$ is minimal with $\forall v\,\phi(x,u,v)$},\\
\omega & \text{if $\neg\psi(x)$ holds},
\end{cases}\\
M'_{i,4x+2}&\cong\begin{cases}
i+u & \text{if $\psi(x)$ holds and $u$ is minimal with $\forall v\,\phi(x,u,v)$},\\
\omega & \text{if $\neg\psi(x)$ holds}.
\end{cases}
\end{align*}
We obtain an embedding $F':N'_i\to N'_j$ for some indices~$i\neq j$.

Once again, we write $1+M'_{l,2y}=\{0\}\cup M'_{l,2y}$ and identify $1+M'_{l,2y+1}=1+\omega$ with $\omega$. For each $y<2n$, the orders $M'_{i,2y}+\omega$ and $M'_{j,2y}+\omega$ are isomorphic. We thus get $F'(2y,0)\geq(2y,0)$ by induction on~$y$. In contrast to the previous proof, the inequality $F'(2y+1,0)\geq(2y+1,0)$ is only available for every second~$y$, where the parity of the admissible~$y$ corresponds to the order between~$i$ and~$j$. Nevertheless, we again get $F'(2y+1,k)<(2y+2,0)$ for any $y<2n$ and all~$k\in\mathbb N$.

In case we have~$i<j$, we can conclude as before. So now assume~$i>j$. We put
\begin{equation*}
X':=\{x<n\,|\,\text{there is a $k\in M'_{i,4x+2}$ with $F'(4x+2,k)\geq(4x+3,0)$}\}.
\end{equation*}
It is still true that $x\in X'$ entails $\psi(x)$. To complete the proof, we show that the converse implication holds for any~$x<n$. Aiming at a contradiction, we assume that we have $\psi(x)$ but also~$x\notin X'$. The latter entails that $F'$ induces an embedding of $M'_{i,4x+2}$ into~$M'_{j,4x+2}$. For the minimal $u$ with $\forall v\,\phi(x,u,v)$, we must thus have
\begin{equation*}
M'_{i,4x+2}\cong i+u\leq j+u\cong M'_{j,4x+2}.
\end{equation*}
This, however, contradicts the assumption that we have~$i>j$.
\end{proof}

We now reaffirm that the following result of Shore~\cite{Shore} holds with the indicated base theory. The result remains valid when Fra\"iss\'e's conjecture for well orders~is restricted to either of the principles ($\mathcal{WF}1$) and ($\mathcal{WF}2$) mentioned above. To~see this for ($\mathcal{WF}1$), one uses our Theorem~\ref{thm:wf1} and Shore's Theorem~1.2 to reach arithmetic comprehension. Given the latter, one can conclude by Shore's proof of his Theorem~3.7 (see~\cite{Shore} for all cited results by Shore). For ($\mathcal{WF}2$) one invokes the proof of Shore's Theorem~3.8, where his Theorem~3.1(ii) is restored by our Theorem~\ref{thm:wf2}. 

\begin{theorem}[Shore~\cite{Shore}]
Over the theory~$\RCA_0$, arithmetic transfinite recursion is equivalent to Fra\"iss\'e's conjecture for well orders, i.\,e.~to the statement that any infinite sequence of well orders  $L_0,L_1,\ldots$ admits $i<j$ such that $L_i$ embeds into~$L_j$.
\end{theorem}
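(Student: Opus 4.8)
The plan is to establish the two implications separately over $\RCA_0$. The equivalence itself is Shore's~\cite{Shore}; the contribution here is to secure the backward implication over $\RCA_0$ rather than over $\RCA_0$ augmented by $\Sigma^0_2$-induction, which is exactly where the gap described in the introduction sits. I therefore expect the forward implication to be routine and the backward one to require the machinery of Section~\ref{sect:Fraisse-Sigma2}.

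For the forward implication I would work in $\ATR_0$, which proves that any two countable well orders are comparable~\cite{friedman_hirst,simpson09}. Given well orders $L_0,L_1,\ldots$, suppose toward a contradiction that no $i<j$ satisfies $L_i\leq_w L_j$. Comparability then forces, for every $i$, that $L_{i+1}$ is isomorphic to a \emph{proper} initial segment of $L_i$; moreover this isomorphism is unique, since embeddings between initial segments of well orders are necessarily unique (as already used in the proof of Theorem~\ref{thm:forward_implication}), so the comparison maps assemble into a sequence without any appeal to choice. Composing them produces a strictly decreasing chain of proper initial segments $L_0\supsetneq I_1\supsetneq I_2\supsetneq\cdots$; picking $y_i\in I_i\setminus I_{i+1}$ yields an infinite descending sequence $y_0>y_1>\cdots$ inside the well order~$L_0$, a contradiction. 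Hence some $i<j$ has $L_i\leq_w L_j$.

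For the backward implication I would run the route already indicated before the statement. Working over $\RCA_0$, assume \Fraisse's conjecture for well orders. Since the principle ($\mathcal{WF}1$) only restricts the class of admissible sequences, it is an immediate consequence, and Theorem~\ref{thm:wf1} then yields $\Sigma^0_2$-induction. Over the resulting theory $\RCA_0+\Sigma^0_2\text{-induction}$ --- which is precisely Shore's base theory --- his Theorem~1.2 upgrades ($\mathcal{WF}1$) to arithmetic comprehension, and his proof of Theorem~3.7 then derives arithmetic transfinite recursion from the conjecture over $\ACA_0$. One may equally pass through ($\mathcal{WF}2$), which is likewise an immediate consequence of the full conjecture: here Theorem~\ref{thm:wf2} restores Shore's Theorem~3.1(ii), and the proof of his Theorem~3.8 applies.

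The step I expect to carry all the weight is the derivation of $\Sigma^0_2$-induction over the bare base theory. Shore's own reduction to $\RCA_0$ passes through the well foundedness of $\omega^\omega$, which $\RCA_0$ cannot prove, and repairing this is exactly the purpose of Theorems~\ref{thm:wf1} and~\ref{thm:wf2}: their witnessing well orders are assembled from copies of $\omega$ and finite blocks, all of order type well below $\omega^\omega$, so the needed instances of $\Sigma^0_2$-induction become available already in $\RCA_0$. Everything downstream of $\Sigma^0_2$-induction is then genuinely Shore's, and the equivalence follows.
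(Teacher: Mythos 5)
Your proposal is correct and takes essentially the same approach as the paper: the backward direction follows exactly the paper's route (Fra\"iss\'e's conjecture for well orders immediately yields ($\mathcal{WF}1$) and ($\mathcal{WF}2$), Theorems~\ref{thm:wf1} and~\ref{thm:wf2} supply $\Sigma^0_2$-induction over $\RCA_0$, and Shore's Theorems~1.2, 3.7 and~3.8 do the rest), which is the only part the paper itself argues. The forward direction, which the paper simply attributes to Shore, is indeed the standard comparability argument you sketch; note only that uniqueness of the comparison maps by itself does not produce the sequence of nested initial segments --- one also needs that this uniqueness makes the sequence $\Delta^1_1$-definable, so that it exists by comprehension available in $\ATR_0$.
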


We conclude with a discussion of `natural' descriptions of orders.

\begin{remark}\label{rmk:natural}
Given an axiom system of arbitrary strength, one can produce a recursive index of an order isomorphic to~$\omega$ such that the axiom system cannot prove that the index describes a well order, as noted by G.~Kreisel (see e.\,g.~\cite{rathjen-realm}). In proof theory, this has lead to a discussion about `natural' or `canonical' descriptions of well orders. While one may not expect a definitive explication of `natural', it is possible to isolate relevant features and to argue that specific descriptions like the standard notation system for~$\varepsilon_0$ are natural (cf.~\cite{feferman-ordinal-notations} or the more recent~\cite{beklemishev-natural-ordinals}). For finite orders, it should not be too controversial to assert that the positive integers provide canonical representatives. In the proof of Theorem~\ref{thm:wf1}, we did not use these representatives to define the orders~$M_{i,2x}$. It was indeed crucial to work with `nonstandard' descriptions, for which we could not decide whether the resulting orders were finite. Similar phenomena occur in other parts of Shore's proof that Fra\"iss\'e's conjecture entails arithmetic transfinite recursion. In view of this observation, one may wonder how much strength Fra\"iss\'e's conjecture retains when we only admit `natural' descriptions of orders. It is not clear whether this question can be answered or even formulated in a fully satisfying way. At the same time, the positive and negative results of the previous section seem to yield some relevant insights. On the positive side, the proof of the forward direction in Theorem~\ref{thm:ord_exp} is based on natural properties of ordinal exponentiation. As this direction was the crucial step towards \mbox{Corollary~\ref{cor:Fraisse-aca}}, it seems justified to conclude that arithmetic comprehension follows from Fra\"iss\'e's conjecture for `natural' linear orders. On the negative side, the given line of argument cannot be extended beyond arithmetic comprehension, as shown by Corollary~\ref{cor:unprov-wwo}. The crucial property behind this corollary is closure under exponentiation (cf.~Proposition~\ref{prop:wo-wwo-epsilon}), which can be seen as a minimal condition on natural notations for larger proof-theoretic ordinals.
\end{remark}

\bibliographystyle{plain}
\bibliography{references}
\end{document}